\theoremstyle{plain}
\newtheorem{theorem}{Theorem}[section]
\theoremstyle{theorem}
\newtheorem{prop}[theorem]{Proposition}
\newtheorem{lem}[theorem]{Lemma}
\newtheorem{cl}{Claim}[]
\theoremstyle{definition}
\newtheorem{defn}[theorem]{Definition}
\newtheorem{rmk}[theorem]{Remark}
\newcommand{\m}{M\"{o}bius }
\newcommand{\R}{{\mathbb R}}
\numberwithin{figure}{section}
\title{Colorings of  the $n$-Sphere and Inversive Geometry}		
\author{Joel C. Gibbons}
\address{Logistic Research \& Trading Co., P.O. Box 63, St. Joseph, MI 49085}
\author{ Yusheng Luo}
\address{Dept. of Mathematics, National Univ. of Singapore, 10 Lower Kent Ridge Road, Singapore 119076}
\date{July 19, 2013, v.arXiv}
\begin{document}

\begin{abstract}
This paper 
shows that in dimensions $n \ge 2$
for any partition of the set of points in the standard $n$-sphere
$\sum_{i=0}^n x_i^2 =1$ in $\R^{n+1}$  into $(n+3)$ or more nonempty sets, there exists a 
hyperplane in $\R^{n+1}$ that intersects at least $(n+2)$ of these sets. 
This result is used to prove a result in inversive geometry.  A mapping $T: \mathbb{S}^2 \to \mathbb{S}^n$, for $n\geq 2$,
not assumed continuous or even measurable,  is called weakly circle-preserving if the image of any 
circle is contained in some circle in the range space $\mathbb{S}^n$. If such a map $T$ has a range $T(\mathbb{S}^2)$ in circular general position, meaning that any circle in the $\mathbb{S}^n$ misses
at least two points of $T(\mathbb{S}^2)$, then $T$ must be a \m transformation of $\mathbb{S}^2$.
\end{abstract}

\maketitle
\section{Introduction}

For definiteness
the  standard $n$-sphere ${\mathbb S}^n$ embedded in $\mathbb{R}^{n+1}$ is the real algebraic set 
$$
\mathbb{S}^n := \{ (x_0, x_1,  \cdots , x_{n})\in \R^{n+1} : \sum_{i=0}^{n} x_i^2 =1\}.
$$

The main result of this paper concerns properties of colorings of points of the $n$-sphere,
and can be stated in two ways, a coloring form and a sphere cutting form.


We start with the coloring form.
Conventionally a coloring of a manifold is a partition into well-defined  sets
which are the color classes.   The cardinality of the coloring is the index set of the partition, and is often 
appended to the description of the coloring, 
We define a   {\em full  $k$-coloring} of the $n$-sphere 
to be  a partition of $\mathbb{S}^n$  into $k$ nonempty sets: every color must be used.
That is, it is given by a map $\Gamma: X \to \{ 1, 2, ..., k\}$ which is onto the given range.
Here we allow arbitrary partitions in which 
the sets are not restricted in any way.

%
%

\begin{theorem}\label {th11b} 
{\rm (Main Theorem: Coloring Form)}
Let $n \ge 2$ and the points of a standard $n$-sphere $\mathbb{S}^n$  be colored
with a full $k$-coloring. If  $k\geq n+3$ then there exists some  $(n-1)$-sphere 
that contains at least $(n+2)$ different  colors.
\end{theorem}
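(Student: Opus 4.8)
The plan is to prove the contrapositive of the equivalent hyperplane statement. Rephrasing, an $(n-1)$-sphere carrying $\ge n+2$ colors is exactly a hyperplane of $\R^{n+1}$ meeting at least $n+2$ of the color classes, so it suffices to show: if every hyperplane meets at most $n+1$ classes, then there are at most $n+2$ classes. First I would reduce to the case of exactly $n+3$ colors. Given more, choose $n+3$ of the classes and recolor every remaining point into one of them; a hyperplane meeting $n+2$ classes of this coarser coloring meets $n+2$ \emph{distinct} original colors, so the general case follows from the case $k=n+3$. Thus assume exactly $n+3$ colors and, for contradiction, that every hyperplane misses at least two of them.

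Next I would pick one representative $p_i$ from each class, giving $n+3$ points of distinct colors on $\mathbb{S}^n$. If some $n+2$ of them already lie on a common $(n-1)$-sphere we are done, so assume they are in general position; lying on $\mathbb{S}^n$ they are then in strictly convex position in $\R^{n+1}$, and $n+3=(n+1)+2$ points carry a one-dimensional space of affine dependences. Writing the essentially unique dependence as $\sum_i c_i p_i = 0$ with $\sum_i c_i = 0$ and all $c_i\neq 0$, I obtain a Radon partition $A=\{i:c_i>0\}$, $B=\{i:c_i<0\}$ with $|A|,|B|\ge 2$ (strict convexity forbids a singleton part). Applying an affine functional that vanishes on the $n+1$ points other than $p_j,p_l$ to this dependence gives $c_j f(p_j)+c_l f(p_l)=0$, so $p_j$ and $p_l$ lie on opposite sides of the hyperplane through those $n+1$ points exactly when $c_jc_l>0$, i.e. when $j,l$ are on the same side of the Radon partition. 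This is the standard description of the facets of $\mathrm{conv}(p_1,\dots,p_{n+3})$ as the index sets obtained by deleting one element of $A$ and one of $B$.

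The decisive step is to manufacture the $(n+2)$-nd color. Using $|A|\ge 2$ I would pick $a_1,a_2\in A$ and consider the ``equatorial'' $(n-1)$-sphere $\Sigma$ through the other $n+1$ representatives; under the contradiction hypothesis $\Sigma$ carries exactly the $n+1$ colors of those points, while by the sign computation $p_{a_1},p_{a_2}$ lie in the two \emph{opposite} open caps cut out by $\Sigma$. Since $n\ge 2$ the sphere $\Sigma$ is connected and its caps are genuinely separated; the heart of the argument is to combine this separation with the fact that the pencil of $(n-1)$-spheres through a fixed $(n-2)$-subsphere of $\Sigma$ sweeps all of $\mathbb{S}^n$, so as to force a point of color $c_{a_1}$ or $c_{a_2}$ onto some section that still retains $n+1$ of the core colors, yielding $n+2$ colors and the contradiction. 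I expect this to be the main obstacle. Because the color classes are arbitrary, not even measurable, a naive moving-hyperplane argument only \emph{trades} one color for another as the plane pivots and never raises the count; one therefore needs a genuinely global argument exploiting connectedness of sections, available only for $n\ge 2$. This is exactly where the hypothesis $n\ge 2$ enters: for $n=1$ a section is a $0$-sphere, two disconnected points, the separation argument collapses, and the statement is indeed false.

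If the connectedness argument proves awkward for arbitrary sets, an alternative I would try is a covering argument on the $(n+1)$-dimensional manifold of hyperplanes: the contradiction hypothesis assigns to each hyperplane a missed pair among the $\binom{n+3}{2}$ possibilities, and I would try to show that some pair is missed by a topologically large family of hyperplanes, whose sections sweep out an open region of $\mathbb{S}^n$ avoided by two entire color classes, and then play this against the Radon structure. Either way, converting the Radon separation of the two omitted colors into an actual $(n+2)$-colored section is the crux, whereas the reductions and the affine-dependence bookkeeping in the earlier steps are routine.
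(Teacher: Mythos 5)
Your reduction to $k=n+3$ is fine, and your Radon-partition argument is correct; in fact it is a cleaner derivation of exactly what the paper isolates as Lemma~\ref{separate-lem}: assuming no $(n-1)$-sphere carries $n+2$ colors, there is an $(n-1)$-sphere containing $n+1$ of the colors together with two points of the remaining two colors lying in opposite components of its complement. (The paper proves that lemma by hand, with inscribed-angle and rotation constructions split into the cases $n=2$, $n=3$, $n>3$; your affine-dependence computation would replace all of that.) But everything after that point in your proposal is not a proof. The step you yourself call ``the decisive step'' --- converting the separation into a sphere with $n+2$ colors --- is the entire content of the theorem, and you offer only two tentative strategies prefaced by ``I would try'' and ``I expect this to be the main obstacle.'' Worse, both strategies are of the pivoting/sweeping type that you correctly observe can only trade one color for another: if you pivot about an $(n-2)$-subsphere through $n$ of the core representatives until the section hits $p_{a_1}$, the resulting sphere carries those $n$ colors plus $c_{a_1}$, i.e.\ again only $n+1$ colors, so under the contradiction hypothesis it is merely another $(n+1)$-colored sphere and nothing is gained. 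Your outline contains no idea that breaks this trading phenomenon.

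The paper's argument after the separation lemma is where the real work lies, and it is of a different nature. For $n\ge 3$ it is still short: the line through the two separated points meets the separating sphere in two points, which must be of the same color (a $4$-colored circle would propagate, by adjoining points of new colors, up to an $(n+2)$-colored $(n-1)$-sphere); a M\"{o}bius transformation sends these two intersection points to $\vec 0$ and $\infty$, turning the separating sphere into a hyperplane through the origin; one then shows the line $[\vec 0\, x_{n+3}]$ is bicolored and meets the sphere through $x_1,x_2,x_4,\ldots,x_{n+2}$ in two points whose colors force $n+2$ colors on that sphere. For $n=2$ even this fails, and the paper instead develops a global algebraic structure: the signed norms of colored points on the two distinguished lines form a subgroup $G$ of $\mathbb{R}^*$ and cosets of it (Claims~\ref{functionh-lem}--\ref{group-lem}), every line through the origin must be colored $\{1,2,3\}$ or $\{1,4,5\}$ (Claims~\ref{3-color-line-lem}--\ref{intersect-lem}), and an interval argument then forces $G=\mathbb{R}^*$, a contradiction. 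Proposition~\ref{pr53} of the paper shows that this heavier machinery is unavoidable: the constraints visible on any two intersecting lines admit a consistent $5$-coloring with no $4$-colored circle, so no local pencil or covering argument of the kind you sketch can succeed in dimension $2$. In short, your bookkeeping and your rediscovery of the separation lemma are correct, but the heart of the proof is missing.
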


Here an $(n-1)$-sphere in $\mathbb{S}^n$ is an element of the orbit
of the $(n-1)$-sphere $x_0=0$ in $\mathbb{S}^n$ under the action of the generalized M\"{o}bius group
acting on ${\mathbb S}^n$.
Alternatively it is  the intersection of a 
sphere with some hyperplane in $\R^{n+1}$ that contains an open set inside 
the sphere $\mathbb{S}^n$ in $\mathbb{R}^{n+1}$.
Notice this hyperplane need  {\em not } pass through the origin.



We can alternatively  re-state the main result in terms of partition of the points of the $n$-sphere, 
viewed as embedded in ${\mathbb R}^{n+1}$ as follows.

%
\begin{theorem}\label{th11} 
{\rm (Main Theorem: Sphere-Cutting Form)}
Let  $n\geq 2$ and 
let the standard $n$-sphere 
$\mathbb{S}^n$ be partitioned as a set  into $k$ nonempty subsets.
If $k \ge n+3$, then  there exists a  hyperplane
in ${\mathbb R}^{n+1}$ that intersects at least $(n+2)$ of these sets.
\end{theorem}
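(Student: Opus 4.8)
The plan is to recast the statement geometrically and then argue by contradiction. First I would record the elementary equivalence that makes the two forms of the Main Theorem interchangeable: a hyperplane $H \subset \mathbb{R}^{n+1}$ that actually cuts $\mathbb{S}^n$ meets a subset $S_i$ exactly when the $(n-1)$-sphere $H \cap \mathbb{S}^n$ contains a point of $S_i$. So asking for a hyperplane meeting $n+2$ of the sets is the same as asking for $n+2$ points, one in each of $n+2$ distinct classes, that lie on a common hyperplane. Since $n+2$ points of $\mathbb{R}^{n+1}$ lie on a common hyperplane precisely when they are affinely dependent, the theorem reduces to: any partition of $\mathbb{S}^n$ into $k \ge n+3$ nonempty sets admits $n+2$ points, of $n+2$ distinct classes, that are affinely dependent. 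Note that any $n+1$ points of $\mathbb{R}^{n+1}$ automatically lie on a common hyperplane, so the entire force of the statement is in passing from $n+1$ to $n+2$, i.e.\ in producing one genuine affine dependence.

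I would then argue by contradiction, assuming that every cutting hyperplane meets at most $n+1$ of the classes. The first consequence to extract is a \emph{saturation principle}: if $p_1, \dots, p_{n+1}$ are affinely independent points carrying $n+1$ distinct colors, then the unique $(n-1)$-sphere $\Sigma$ through them already exhibits $n+1$ colors, so by hypothesis every point of $\Sigma$ must also carry one of those same $n+1$ colors. I would secure the existence of such a rainbow independent $(n+1)$-tuple separately; its failure would itself yield an affine dependence among fewer than $n+2$ distinctly-colored points, and hence be handled directly. Saturation then pins down the colors on a large family of spheres, and the game becomes to propagate it until some sphere is forced to display $n+2$ colors.

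For the propagation I would fix $n$ of the points, say $p_1, \dots, p_n$; these determine the $(n-2)$-sphere $C$ through them and the pencil $\{\Sigma_\theta\}$ of $(n-1)$-spheres containing $C$. Every member of this pencil carries the $n$ colors of $p_1,\dots,p_n$, so it suffices to produce a single $\Sigma_\theta$ meeting two further classes. I would parametrise the pencil by $\theta$, use that each point of $\mathbb{S}^n \setminus C$ lies on exactly one $\Sigma_\theta$, and study the distribution of the residual colors over $\theta$ by an intermediate-value or degree argument as the sphere sweeps around $C$; the aim is to show the contradiction hypothesis forces the residual coloring to be constant on the fibres of this sweep, and then, by letting the pencil vary over the connected family of all such configurations, to conclude that the global color-set is constant, contradicting $k \ge n+3$. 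I expect to run this as an induction on $n$, with the planar case $n=2$ (a $5$-coloring of $\mathbb{S}^2$ must place $4$ colors on some circle) treated directly.

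The main obstacle, and the reason a naive one-pencil argument is insufficient, is precisely that a single pencil cannot by itself force two residual colors onto one sphere: one can color the caps cylindrically around $C$ so that every member of that one pencil stays monochromatic in the residual palette. The contradiction must therefore come from the interaction of many pencils, that is, from the full $(n+1)$-dimensional space of cutting hyperplanes, and this is exactly where the count enters, since $n+3 = (n+1)+2$ is two more than the dimension of that space and is what makes the saturated color-set rigid. A second, orthogonal obstacle is that the classes are arbitrary and possibly non-measurable, so no topology may be placed on the classes themselves; each step must be phrased in terms of finitely many chosen representative points and the continuously varying signed position of those points relative to the moving hyperplane, so that the topological input (intermediate value, degree, or Borsuk--Ulam) is applied only on the finite-dimensional parameter space of hyperplanes and never on the color classes.
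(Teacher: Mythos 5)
Your opening reductions are sound: the equivalence with affine dependence of $n+2$ distinctly colored points, and the saturation principle (under the contradiction hypothesis, the $(n-1)$-sphere through $n+1$ affinely independent points of $n+1$ distinct colors carries exactly those colors) are both correct, and the degenerate case where no rainbow independent $(n+1)$-tuple exists is indeed handled directly. The gap is that everything after this is a hope rather than an argument. The step that must do all the work --- forcing two residual colors onto a single sphere --- is exactly the step you leave open: you correctly observe that a one-pencil sweep cannot do it (your cylindrical coloring around $C$ kills it), and you then appeal to ``the interaction of many pencils'' via an intermediate-value, degree, or Borsuk--Ulam argument on the space of hyperplanes. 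But with arbitrary, possibly non-measurable classes there is no function on that parameter space to which any of these topological tools can be applied; restricting attention to finitely many representative points, as you propose, only constrains the spheres passing through those points and leaves the residual coloring of every other sphere unconstrained. No concrete mechanism is identified, and it is precisely here that the theorem is hard.

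The paper's actual mechanism is algebraic, not topological, and the contrast shows what is missing. Its first step is a separation lemma: under the contradiction hypothesis there is an $(n+1)$-colored $(n-1)$-sphere $S$ and two points of the two remaining colors which are \emph{separated} by $S$, proved by elementary geometry after a M\"{o}bius normalization. For $n\geq 3$ a short combinatorial argument then finishes: the line joining the two separated points meets $S$ in two points which must share one color; normalizing those two points to $\vec{0}$ and $\infty$, one exhibits a sphere forced to carry $n+2$ colors. For $n=2$ --- the case your plan defers to a base case ``treated directly'' --- this is by far the hardest part of the theorem and occupies an entire section of the paper: after normalization, the concyclicity criterion in terms of signed norms (four points $a,a',b,b'$ on the two distinguished lines are concyclic iff $xx'=yy'$) shows that the color sets on those lines are cosets of a subgroup $G$ of $\mathbb{R}^*$; a further argument shows every line through the origin is colored $\{1,2,3\}$ or $\{1,4,5\}$; and finally one shows $G$ contains a nondegenerate interval, hence $\mathbb{R}^+$, hence all of $\mathbb{R}^*$, contradicting the existence of a second color on the line $[145]$. (The paper's Proposition 4.3 even shows that the coset structure on two lines alone is consistent, so the final global step is genuinely needed.) Nothing playing the role of this multiplicative rigidity appears in your proposal, so the proof cannot be completed along the lines you describe without supplying an entirely new idea at its core.
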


Here we note  that given any $(n+1)$ points on the $n$-sphere $\mathbb{S}^n$ one can 
always find a hyperplane 
in $\mathbb{R}^{n+1}$ passing through them, hence one can trivially guarantee 
 to intersect $(n+1)$ or more of  the sets in the partition. The content of the theorem lies 
in replacing $(n+1)$ by $(n+2)$ in the conclusion. 

In the paper  we shall prove the main theorem in the coloring  form.
It turns out  the  theorem becomes harder to prove as the dimension gets smaller. In Section \ref{sec2}  we give a proof
valid for dimensions $n \ge 3$. In Section \ref{sec3} we give a proof for the most interesting and hardest  case $n=2$.
The result does not hold in dimension $n=1$.

In Section \ref{sec6} we 
present examples  showing that the theorem is sharp in two senses.
First, if the theorem's hypothesis is weakened from  $(n+3)$ to $(n+2)$, then the
conclusion of the theorem no longer holds.
Second,  no matter how large a lower bound is placed on the number of sets $k$ in the partition,
one can never improve the conclusion of the   theorem to replace  $(n+2)$ with $(n+3)$.

\subsection{ Inversive Geometry: Generalized M\"{o}bius Group}

The main  result has a connection with  inversive geometry. Inversive geometry is the study of
geometric properties that are preserved under the action of the generalized \m group which will be defined in the next paragraph.
This is the group of all transformations of space $\mathbb{S}^n$ that map (generalized) $(n-1)$-spheres
to (generalized) $(n-1)$-spheres. 
The connection is that  the set of all $(n-1)$-spheres of a standard $n$-sphere form a single
orbit under the action of the group of inversions. In this framework Theorem \ref{th11b}
asserts that for a full  $(n+3)$-coloring,  the orbit of a single $(n-1)$-sphere 
under the generalized M\"{o}bius group contains some element requiring $(n+2)$-colors.

Under stereographic projection, we can identify $\mathbb{S}^n$ with $\R^n_\infty:=\R^n\cup\{\infty\}$. 
In this space, an \begin{em} inversion \end{em}(or a \begin{em}reflection\end{em}) in an $(n-1)$-sphere $S(a,r):=\{x\in\R^n:\|x-a\| = r\}$ is the function $\phi$ defined by $\phi(x) = a+(\frac{r}{\|x-a\|})^2(x-a)$. $\phi$ is well defined on $\R^n_\infty - \{a,\infty\}$, and at these two points, we define $\phi(a) = \infty$ and $\phi(\infty) = a$. A \begin{em}reflection\end{em} in a hyperplane 
is the usual reflection defined in $\R^n$ and fixes the point $\infty$.

We define  a {\em \m transformation} on $\R^n_\infty \cong \mathbb{S}^n$ to be a finite composition of reflections in 
$(n-1)$-spheres or hyperplanes. The group of all \m transformations is called 
the Generalized \m Group $GM(\R^n_\infty)$, following Beardon \cite[Chapter 3]{B83}.

In dimension two, with identification of $\R^2$ with $\mathbb{C}$, \m transformations include all
 the linear fractional transformations $z\mapsto \frac{az+b}{cz+d}$,  which are orientation-preserving maps 
 and also the conjugate ones $z\mapsto\frac{a\bar z+b}{c\bar z+d}$,
  which are orientation reversing maps. The former are holomorphic maps and the latter anti-holomorphic.
  We should be aware of the subtle difference between \m transformations and linear fractional 
  transformations (which sometimes, unfortunately, are also called \m transformation in complex analysis) in dimension two.

The generalized \m group is closely related to the conformal group on $\mathbb{S}^n$.
 Any \m transformation is a conformal diffeomorphism on $\mathbb{S}^n$. A result of  Liouville\cite{Lio1850}  in 1850 
 asserts (in modern form) a  local converse: when $n\geq 3$, any smooth conformal diffeomorphism of a simply
 connected open domain $U$ of ${\mathbb S}^n$ into ${\mathbb S}^n$ is 
 the restriction of a \m transformation. In particular, for $n\geq 3$, the conformal group on $\mathbb{S}^n$ 
 is precisely the generalized \m group.

\subsection{Analogue of Main Theorem in Euclidean Geometry}\label{main-thm-euclid}

The lower bound of $n+2$ colors found in the conclusion of Theorem \ref{th11b}   is a result in inversive geometry insofar
as it is associated to 
the generalized \m group. For Euclidean geometry in $\R^{n+1}$,
the set of isometries preserving the standard $n$-sphere is given by the action of the orthogonal group. 
Under this action, all $(n-1)$-spheres in $\mathbb{S}^n$ no longer form a single orbit. Nevertheless, 
the orthogonal group acts transitively on {\em great $(n-1)$-spheres}, which consist of 
intersections of the standard $n$-sphere with hyperplanes passing
through the origin. Among the orbits of great $(n-1)$-spheres, there is a  parallel result: 
\begin{theorem}\label{great-sphere-thm}
For any full $k$-coloring of the standard $n$-sphere with $k\geq n+2$, there exists some great $(n-1)$-sphere that contains at least $(n+1)$ different colors. 
\end{theorem}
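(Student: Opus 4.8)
The plan is to reduce the statement to a purely linear-algebraic claim and then eliminate the single hard configuration with an explicit witness point. Since a great $(n-1)$-sphere is the trace on $\mathbb{S}^n$ of a hyperplane through the origin, a set of unit vectors lies on a common great $(n-1)$-sphere exactly when it lies in a common $n$-dimensional linear subspace of $\R^{n+1}$. Any $n$ vectors span a subspace of dimension at most $n$, so one point from each of $n$ distinct classes always lies on a common great $(n-1)$-sphere; this recovers the trivial bound of $n$ colors. Consequently the theorem is equivalent to the claim that, given $k\geq n+2$ nonempty classes, one can choose points $q_1,\dots,q_{n+1}$ from $n+1$ distinct classes that are linearly dependent: such a family spans a subspace of dimension at most $n$, which I extend to an $n$-dimensional subspace $H$, and then $H\cap\mathbb{S}^n$ is a great $(n-1)$-sphere carrying all $n+1$ colors.

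Next I would argue by contradiction, assuming property $(\star)$: every $n+1$ points drawn from distinct classes are linearly independent. Choosing one representative $p_i$ from each of $n+2$ distinct colors ($i=1,\dots,n+2$, available since $k\geq n+2$) gives $n+2$ vectors in $\R^{n+1}$, hence a linear relation $\sum_{i=1}^{n+2}c_ip_i=0$. Because $(\star)$ makes every $n+1$ of the $p_i$ a basis, these vectors span $\R^{n+1}$, so the relation is unique up to scale and has full support: all $c_i\neq0$. This circuit configuration is exactly the case the trivial argument cannot reach, and it is the heart of the proof.

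To finish I would produce a witness. Set $q=c_1p_1+c_2p_2$, which is nonzero since $c_1,c_2\neq0$ and $p_1,p_2$ are independent, and let $\hat q=q/\|q\|\in\mathbb{S}^n$ have color $c$. The relation gives $q=-\sum_{i=3}^{n+2}c_ip_i$, so $\hat q$ lies simultaneously in $\mathrm{span}(p_1,p_2)$ and in the $n$-dimensional space $\mathrm{span}(p_3,\dots,p_{n+2})$. If $c\notin\{3,\dots,n+2\}$, then $\{\hat q,p_3,\dots,p_{n+2}\}$ is a linearly dependent family of $n+1$ points (it sits in an $n$-dimensional space) carrying $n+1$ distinct colors, contradicting $(\star)$. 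If instead $c\in\{3,\dots,n+2\}$, then $\{\hat q,p_1,p_2\}$ is already dependent, and adjoining any $n-2$ of the points $p_i$ with $i\in\{3,\dots,n+2\}\setminus\{c\}$ yields a dependent family of $n+1$ points of $n+1$ distinct colors, again contradicting $(\star)$.

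Either way $(\star)$ fails, which proves the claim and hence the theorem. The main obstacle is precisely the circuit case: once one observes that the coincidence point $\hat q$ must lie in the two complementary coordinate subspaces $\mathrm{span}(p_1,p_2)$ and $\mathrm{span}(p_3,\dots,p_{n+2})$, the contradiction is immediate, and everything else is bookkeeping. I note that the padding step in the second case requires $n-2\geq0$, so the argument needs $n\geq2$; this is consistent with the failure of the statement on $\mathbb{S}^1$, where antipodally constant colorings keep every great $0$-sphere monochromatic.
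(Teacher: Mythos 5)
Your proposal is correct, and your closing remark about needing $n\ge 2$ is on point (this is the paper's standing hypothesis, and the result genuinely fails on $\mathbb{S}^1$). Your route is, at heart, the same mechanism as the paper's, but packaged as a finite linear-algebra (circuit) argument rather than a geometric one. The paper argues via two small lemmas: (i) every great $(n-1)$-sphere meets every great circle, since an $n$-dimensional and a $2$-dimensional linear subspace of $\R^{n+1}$ intersect nontrivially; and (ii) under the contradiction hypothesis, no great circle carries $3$ colors, proved by padding a $3$-colored great circle with points of $n-2$ further colors. It then takes a great sphere $S$ through points of colors $1,\dots,n$ and a great circle $C$ through points of colors $n+1,n+2$; the hypothesis forces $\Gamma(S)=\{1,\dots,n\}$ and $\Gamma(C)=\{n+1,n+2\}$, while (i) forces $S\cap C\neq\emptyset$, a contradiction. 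Your witness $\hat q$ is precisely such an intersection point, lying in $\mathrm{span}(p_1,p_2)\cap\mathrm{span}(p_3,\dots,p_{n+2})$; your Case 1 is the paper's final color clash, and your Case 2, which pads $\{\hat q,p_1,p_2\}$ with $n-2$ extra points, is exactly the paper's lemma (ii). What your version buys: it is self-contained at the level of finite point configurations, it constructs the intersection point explicitly from the full-support relation $\sum_i c_ip_i=0$ instead of invoking an abstract dimension count, and it makes the $n\ge2$ requirement and the $n=1$ failure transparent. What the paper's version buys: the two lemmas isolate reusable geometric facts, and the argument reads cleanly with no coefficient bookkeeping.
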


 This result is also completely sharp in two senses.
First, if the hypothesis of Theorem \ref{great-sphere-thm}  is weakened from  $(n+2)$ to $(n+1)$, then the
conclusion of the theorem no longer holds. 
Second,  no matter how large a lower bound is placed on the number of sets in the partition,
one can never improve the conclusion of the   theorem to replace  $(n+1)$ with $(n+2)$. 
The proof of Theorem \ref{great-sphere-thm}  is quite simple and we present it in
 section \ref{euclid}.

We note  that the group $O(n)$  is a Lie group of lower rank than that for inversive geometry.
This  may supply one reason why  the proof of Theorem \ref{th11b} 
in the two-dimensional case seems to necessarily be complicated,
and harder than the Euclidean geometry problems  listed above.


\subsection{Application: Characterizing M\"{o}bius transformations on the $2$-sphere}
We give an application of the main theorem for $n=2$ which motivated this work.
It   gives a new  characterization of \m transformations in the plane. 
Note that our definition of \m transformation (when identifying $\R^2$ with $\mathbb{C}$)
a single reflection $z\mapsto \bar{z}$  is a \m transformation,
although it is anti-holomorphic. 
There has been much literature on the characterization of such transformations, for example the
1937 result of Caratheodory \cite{Car37}.  We may also mention
related work done  in recent years.
Many papers in the literature  consider {\em circle-preserving maps},
 which require that the image of a circle to be another circle or a point,  for example, the results in
 \cite{BM01}, \cite{CP99}, \cite{LY09}, \cite{Yao07}, \cite{Yao11}. 

This characterization we consider  is in terms of   weakly circle-preserving maps, which we now define.

\begin{defn} 
A function $T:\mathbb{S}^2\longrightarrow \mathbb{S}^n$ with $n\geq 2$ is called {\em weakly circle-preserving} if for every circle $C \subset \mathbb{S}^2$, $T(C)$ lies in some circle $C'$ in $\mathbb{S}^n$.
\end{defn}
In particular, any  function whose image is finite and
 contains   $3$  or fewer points is weakly circle-preserving. 
We should notice that in this definition we do not assume  that $T$ is injective or
 continuous, or even Borel-measurable. This notion is a much more relaxed notion than has been considered elsewhere,
 and it includes many maps that are not \m transformations. 
  It was originally introduced by the first author with Webb in \cite{GW79} (also \cite{Gib82}), using however
 the name ``circle-preserving map".

We will show that a
suitable  ``general position"
condition on the range $T(\mathbb{S}^2)$ of a weakly circle-preserving  map  is sufficient to force
it to be a \m transformation. 

\begin{defn}
A subset $B$ of the $n$-sphere is said to lie in {\em circular general position} if for each  circle  $C$, the complement of $C$ contains at least two points of $B$.
\end{defn}

The definition implies that any $B$ in circular general position must contain at least $5$ points.

In 1979 the first author with Webb \cite{GW79} proved the following  ``six-point theorem".
\begin{theorem}\label{6point}
   {\em (``Six-point theorem")}
Let  $T$ be a weakly circle-preserving map from  ${\mathbb S}^2$ into  ${\mathbb S}^n$ with $n\geq 2$
which satisfies the following two conditions. 
\begin{enumerate}
\item
Every circle in the codomain ${\mathbb S}^n$ does not contain
at least two points in the image $T({\mathbb S}^2)$, i.e.  
the image $T({\mathbb S}^2)$ is in circular general position in $\mathbb{S}^n$. 

\item
The image $T({\mathbb S}^2 )$ contains at least six distinct points. 
\end{enumerate}
Then $T({\mathbb S}^2 )$ is a $2$-sphere and $T$ is a \m transformation.
\end{theorem}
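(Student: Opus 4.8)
The plan is to reduce the statement to the classical fundamental theorem of inversive geometry, which asserts that any bijection between two $2$-spheres that carries circles \emph{onto} circles is the restriction of a \m transformation (see Carathéodory \cite{Car37} and Beardon \cite{B83}); crucially this classical result needs no regularity of the map, which matches our setting in which $T$ is a priori only a set map. Thus it suffices to establish three things: that $B := T(\mathbb{S}^2)$ lies on a single $2$-sphere $\Sigma \subset \mathbb{S}^n$; that $T : \mathbb{S}^2 \to \Sigma$ is a bijection; and that $T$ carries each circle of $\mathbb{S}^2$ onto a full circle of $\Sigma$. The hypothesis that $T$ is weakly circle-preserving already supplies the easy half of the last point, namely that the image of a circle is always \emph{contained} in a circle. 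All the work lies in promoting ``contained in'' to ``onto,'' and in obtaining injectivity together with the reduction to a $2$-sphere.

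The engine driving every step is the Main Theorem in coloring form for $n=2$, fed by the tautological partition of the domain into the fibers $\{\, T^{-1}(p) : p \in B \,\}$. Since $|B| \ge 6 > 5 = n+3$, this is a full $k$-coloring with $k \ge 5$ (if $B$ is infinite, retain five of the fibers as classes and lump the remaining points into a sixth class), so Theorem \ref{th11b} produces a circle $C \subset \mathbb{S}^2$ whose image meets at least four classes; a short count shows this forces $|T(C)| \ge 4$. Weak circle-preservation then places these four or more points on a common circle $C'$ of $\mathbb{S}^n$, and because three points determine a circle, $C'$ is the \emph{unique} circle containing $T(C)$. I will use repeatedly that the hypotheses are stable under pre- and post-composition by \m transformations of $\mathbb{S}^2$ and $\mathbb{S}^n$: both weak circle-preservation and circular general position are inversive notions, so any normalization achievable by moving a finite configuration into standard position is permitted.

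From one such ``fat'' circle the global structure is bootstrapped using two elementary incidence facts: two distinct circles meet in at most two points, and a circle together with any point of $\mathbb{S}^n$ off it spans a unique $2$-sphere (a point of $\mathbb{S}^n$ lying off $C'$ automatically lies off the $2$-plane of $C'$, since that plane meets $\mathbb{S}^n$ in exactly $C'$, so the span is genuinely three-dimensional). Fixing $C'$ with its $\ge 4$ image points and choosing, via circular general position, an image point $q \notin C'$, let $\Sigma$ be the $2$-sphere they span. To see that \emph{every} point of $B$ lies on $\Sigma$, and to obtain onto-ness and injectivity, one runs pencils of circles in the domain through suitable preimages: weak circle-preservation sends such a pencil to a family of circles sharing the images of the base points, while circular general position prevents any of these image circles from degenerating or collapsing. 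Concyclicity in $\Sigma$ of three established points with a prospective fourth then forces that fourth point onto $\Sigma$, and the same sweeping shows $T$ cannot identify two points without producing a circle carrying too few colors. The bound $|B| \ge 6$ is exactly what guarantees the simultaneous presence of four concyclic image points and two further points off their circle, which is the minimal data these pencil arguments consume.

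The main obstacle is precisely this bootstrapping step. Theorem \ref{th11b} is an \emph{existence} statement, producing only a single good circle, whereas injectivity, onto-ness, and the inclusion $B \subset \Sigma$ demand control over \emph{all} circles and all points. Pre-composing $T$ with \m transformations moves the guaranteed good circle around but never lets us prescribe which circle is good, so the existence output must be amplified to a global statement by hand, with circular general position serving as the sole mechanism forbidding the pathological collapses that a merely set-theoretic $T$ could otherwise exhibit. Once injectivity, surjectivity onto $\Sigma$, and the onto-circle property are in place, the classical fundamental theorem of inversive geometry identifies $T$ with a \m transformation, and the proof is complete.
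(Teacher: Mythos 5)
The paper itself does not reprove this theorem: it is quoted verbatim as a special case of Theorem 1 of \cite{GW79}, so any self-contained argument is necessarily a different route, and the first stage of yours is sound. The fiber partition $\{T^{-1}(p) : p \in T(\mathbb{S}^2)\}$ (lumping excess fibers if the image is infinite) is a full $k$-coloring of the domain with $k \ge 5$; Theorem \ref{th11b}, proved independently in Sections \ref{sec2}--\ref{sec3}, so there is no circularity, yields a circle $C \subset \mathbb{S}^2$ with $|T(C)| \ge 4$; weak circle-preservation plus the fact that three points determine a unique circle gives a unique circle $C'$ containing $T(C)$; and circular general position supplies a point $q \in T(\mathbb{S}^2) \setminus C'$, so that $C'$ and $q$ span a $2$-sphere $\Sigma$. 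All of that is correct.

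The gap is that the proof stops exactly where the theorem begins. The conclusion needs four global statements: (i) $T(\mathbb{S}^2) \subset \Sigma$; (ii) $T(\mathbb{S}^2) = \Sigma$; (iii) $T$ is injective; (iv) every circle of $\mathbb{S}^2$ maps \emph{onto} a circle of $\Sigma$ --- and these are precisely the hypotheses you must feed to the classical fundamental theorem of inversive geometry at the end. Your text defers all four to ``pencils of circles,'' ``sweeping,'' and the remark that the existence output of Theorem \ref{th11b} ``must be amplified to a global statement by hand,'' but the amplification is never performed, and it is not routine. Concretely: for an arbitrary $x \in \mathbb{S}^2$, the natural move is to run a circle through $x$ and preimages of points already known to lie on $\Sigma$; its image lies in \emph{some} circle through those known points, but a circle through only \emph{two} points of $\Sigma$ need not lie on $\Sigma$, so nothing is forced about $T(x)$. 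One needs the image circle to share \emph{three} points with $\Sigma$, yet a circle in the domain is determined by three points, so you cannot freely prescribe three good preimages together with $x$ on one circle --- especially since the fibers of a merely set-theoretic $T$ may be dense or otherwise wild. The same issue leaves injectivity and surjectivity onto $\Sigma$ unproved; nothing in the sketch rules out $T$ identifying two points. This bootstrapping is the actual content of the theorem and occupies the bulk of \cite{GW79}; a correct writeup must either reproduce that construction or cite it, as the paper does. As written, your argument establishes only the existence of one four-point image circle and a candidate sphere $\Sigma$.
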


\begin{proof} 
This is a special case  of Theorem 1 of \cite{GW79}. That theorem allowed an  
open domain $U= {\mathbb S}^2$ and allowed a range inside ${\mathbb S}^n$ for any $n \ge 2$.
Here we take $U= \mathbb{S}^2$ and $n=2$.  We change notation $f$ to $T$
and change  ``inversive transformation"  to  \m
transformation.
\end{proof}

The main result of this paper allows us to improve the result above, as follows. 
\begin{theorem}\label{5point}
 {\em (``Five-point theorem")}
Let  $T$ be a weakly circle-preserving map from  ${\mathbb S}^2$ into  ${\mathbb S}^n$ with $n\geq 2$
which satisfies the following conditions. 
\begin{enumerate}
\item
Every circle in the codomain ${\mathbb S}^n$ does not contain
at least two points in the image  $T({\mathbb S}^2)$, i.e.
$T({\mathbb S}^2)$ is in circular general position in $\mathbb{S}^2$. 
\item
The image $T({\mathbb S}^2 )$ contains five or more   distinct points. 
\end{enumerate}
Then $T({\mathbb S}^2 )$ is a $2$-sphere and $T$ is a \m transformation.
\end{theorem}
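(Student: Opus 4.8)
The plan is to reduce the five-point situation to the already-established six-point theorem (Theorem \ref{6point}) by showing that the one remaining case---when $T(\mathbb{S}^2)$ consists of \emph{exactly} five points---cannot occur under the circular general position hypothesis. Hypothesis (2) guarantees at least five points in the image, so the image either has exactly five points or has six or more. In the latter case Theorem \ref{6point} applies verbatim and delivers the conclusion at once, so the whole content of the improvement lies in eliminating the exactly-five-point case.

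To rule that case out I would argue by contradiction. Suppose $T(\mathbb{S}^2)=\{p_1,\dots,p_5\}$ has exactly five distinct points. Then $T$ induces a partition of $\mathbb{S}^2$ into the five preimages $T^{-1}(p_i)$, each nonempty precisely because $p_i$ lies in the image; this is a full $5$-coloring of $\mathbb{S}^2$ in the sense of the introduction. Since $n=2$ gives $n+3=5$, the hypothesis of the Main Theorem in coloring form (Theorem \ref{th11b}) is satisfied with $k=5$, and it produces a circle $C\subset\mathbb{S}^2$ meeting at least $n+2=4$ of the color classes. Equivalently, $T(C)$ contains at least four of the five image points.

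Now I would invoke the weakly circle-preserving property: $T(C)$ lies inside some circle $C'$ in the codomain $\mathbb{S}^n$. Hence $C'$ contains at least four of the five image points, so the complement of $C'$ contains at most one point of $T(\mathbb{S}^2)$. This contradicts hypothesis (1), circular general position, which demands that the complement of every circle contain at least two points of the image. The contradiction forces the image to have six or more points, and Theorem \ref{6point} then yields that $T(\mathbb{S}^2)$ is a $2$-sphere and $T$ is a \m transformation.

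The main obstacle is concentrated entirely in the Main Theorem, whose two-dimensional case (proved in Section \ref{sec3}) is the deep input; once it is available, the reduction above is essentially a counting argument, matching the ``four image points forced onto a single circle'' produced by Theorem \ref{th11b} against the ``at most $5-2=3$ image points permitted on any circle'' imposed by general position. The one point I would verify carefully is that the induced partition is genuinely a \emph{full} $5$-coloring, so that Theorem \ref{th11b} applies with $k$ equal to $5$ exactly, which is the threshold $k\ge n+3$ for $n=2$.
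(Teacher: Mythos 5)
Your proposal is correct and follows essentially the same route as the paper's own proof: split into the six-or-more case (handled by Theorem \ref{6point}) and the exactly-five case, where the preimages $T^{-1}(p_i)$ give a full $5$-coloring of $\mathbb{S}^2$, Theorem \ref{th11b} with $n=2$, $k=5$ produces a circle whose image contains four of the five points, and the weakly circle-preserving property then places four image points on one circle, contradicting circular general position. No gaps; your care about the coloring being full is exactly the point the paper relies on implicitly.
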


We have stated Theorem \ref{5point}  in parallel with Theorem \ref{6point}, but condition (2) is
not needed, as it is implied by condition (1). 


Theorem \ref{5point} looks like only a small improvement over the six point theorem. 
However this improvement is important for two reasons.  First,  this  result is sharp:
 we cannot further improve $5$ to $4$, as shown
 by Proposition \ref{sharp-map} in the section~\ref{sec4}.
Second,  this result permits proving a very strong result in inversive geometry
characterizing \m transformations in higher dimensions, subsuming 
previously known results, which we will present
in \cite{GL13b}. This result uses an inductive argument on dimension which requires
the result above for the base case.

\subsection{Discussion}

The statements of the main results above have a flavor of Ramsey theory or anti-Ramsey theory
but do not fall under any of the topics considered up to now in this subject.

Ramsey theory concerns the study of finite partitions of sets
and of  finding monochromatic objects in a single partition class.
It can be combined with geometry. 
The subject of {\em Euclidean Ramsey theory}, initiated by Erd\H{o}s et al. \cite{EGMRSS}  addresses the following problem. 
Consider a four-tuple $(H,X,m,n)$, where $H$ is a subgroup of the Euclidean group of $\mathbb{R}^n$, 
$X$ is a finite subset of $\mathbb{R}^n$, and $m$ is the cardinality of a coloring of $\mathbb{R}^n$. The four-tuple
$(H, X, m, n)$  is said to have the {\em Ramsey property} if the orbit of $X$ under $H$ necessarily contains a 
monochromatic set, for every full coloring of cardinality 
$m$. One problem in Euclidean Ramsey Theory consists of discovering and characterizing such four-tuples.
One can now propose analogous problems for {\em Inversive Ramsey Theory},
which replaces the Euclidean group with  the generalized \m group. 

The subject of {\em  anti-Ramsey theory} concerns finding polychromatic objects: finite sets of points
such that there exists some copy under isometry group which all elements are in different color classes
for allowable partitions (i.e. different elements of the partition). 
This subject has been studied for finite
graphs, where partitions of vertices are also finite sets,  see
 Erd\H{o}s, Siminovits and S\'{o}s \cite{ESS73} and Babai \cite{Bab85}.
  In the  geometric situation that we consider, with infinite sets, and where nevertheless some sets in the partition may have only one element, it seems impossible to guarantee
all elements of a set to fall in different color classes. However the main results above do
have an anti-Ramsey like flavor.


\subsection{Notation} 

Throughout this paper, we will identify $\mathbb{S}^n$ with $\mathbb{R}^n_\infty:=\mathbb{R}^n\cup\{\infty\}$, 
and a $k$-sphere in $\mathbb{S}^n$ with either Euclidean 
${\mathbb S}^k \subset \mathbb{R}^n$ or a $k$ dimensional affine space in $\mathbb{R}^n$ together 
with the point $\infty$. Once the identification is specified, we will use the term extended line for Euclidean line with $\infty$
 and extended $k$-plane for $k$-dimensional affine space in $\mathbb{R}^n$ with $\infty$.
 
A full $k$-coloring of $\mathbb{S}^n$ is a surjective map $\Gamma:\mathbb{S}^n \longrightarrow \{1,...,k\}$. We will say a point $x$ is of color $i$ if $\Gamma(x) = i$, a set $X\subset \mathbb{S}^n$ contains color $i_1,...,i_m$ if $\{i_1,...,i_m\}\subset \Gamma(X)$, and a set $X\subset \mathbb{S}^n$ is colored by $i_1,...,i_m$ if $\Gamma(X) = \{i_1,...,i_m\}$. We will also say a set $X$ is of $m$ colors if $|\Gamma(X)| = m$, and $X$ contains $m$ colors if $|\Gamma(X)|\geq m$. We will sometimes use $\Gamma_i$ to simplify the notation $\Gamma^{-1}(i)$. We will usually use the subscript to describe the color of a point (e.g., $x_i$ is a point of color $i$).
 
 Given a finite set of points $\{x_1,...,x_n\}$ in $\R^n_\infty (= \mathbb{S}^n)$, we will use $(x_1x_2...x_n)$ to denote the smallest dimension (generalized) sphere containing $\{x_1,...,x_n\}$. Notice that $\dim((x_1x_2...x_n))$ is not necessarily $n-2$, however, we do know the dimension is less than or equal to $n-2$. We will simply use $[x_1x_2...x_n]$ for $(x_1x_2...x_n\infty)-\{\infty\}$, i.e., the smallest dimension affine space containing $\{x_1,...,x_n\}$.
 
Sometimes, for convenience, we will also denote a sphere by its colors (e.g., $(123)$ is used to denote a circle of color 1,2 and 3). Similarly, we will also denote an affine space by its colors. We will make it clear whenever we use this notation to make sure the notation is assigned to a specific sphere of these colors.

\section{Proof of Theorem \ref{th11b} for $n\geq 3$}\label{sec2}

The proof of the theorem for $n\geq 3$ is easier than the theorem for $n=2$. The argument is purely combinatorial.
We begin with  the following definition and lemma.

\begin{defn}\label{def21}
Let $S\subset \mathbb{S}^n$ be a $(n-1)$-sphere, we say two points $x,y\in \mathbb{S}^n$ are {\em separated by $S$}
 if $x,y$ lives in two  different connected components of $\mathbb{S}^n-S$.
\end{defn}

\begin{lem}\label{separate-lem}
Let $\mathbb{S}^n$ be full $(n+3)$-colored ($n\geq 2$), assume there is no $(n-1)$-sphere containing $(n+2)$ colors, then there is an $(n-1)$-sphere $S$ of $(n+1)$ colors and two points $x_1, x_2$ of the rest two colors respectively such that $x_1,x_2$ are separated by $S$.
\end{lem}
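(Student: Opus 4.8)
\emph{Plan.} The plan is to fix one representative point in each color class and reduce the statement to a question of affine position in $\R^{n+1}$, then exploit the essentially unique affine (Radon) dependence among $n+3$ points on the sphere. The translation I will use is that an $(n-1)$-sphere $S\subset\mathbb{S}^n$ is cut out by a hyperplane $H\subset\R^{n+1}$, and the two connected components of $\mathbb{S}^n-S$ are exactly the two open caps $\mathbb{S}^n\cap\{f>0\}$ and $\mathbb{S}^n\cap\{f<0\}$ for any affine functional $f$ with $H=f^{-1}(0)$. Hence two points are \emph{separated by} $S$ in the sense of Definition \ref{def21} precisely when $f$ takes opposite signs on them, i.e.\ they lie strictly on opposite sides of $H$. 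So the goal becomes: find $n+1$ points of $n+1$ distinct colors spanning a hyperplane $H$, together with points of the remaining two colors strictly on opposite sides of $H$; the hypothesis then forces the resulting $S=H\cap\mathbb{S}^n$ to carry exactly $n+1$ colors.

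\emph{Non-degeneracy (the crux).} First I would show one can choose representatives $c_1,\dots,c_{n+2}$ of $n+2$ distinct colors that are \emph{affinely independent}, hence span $\R^{n+1}$. I build them greedily, at each stage adding a point of an unused color lying off the affine flat $A=[c_1\cdots c_j]$ spanned so far. The only way to get stuck at a flat $A$ of dimension $<n$ is if every point of every not-yet-used color already lies in $A$; but then, since $A$ also contains the already-chosen representatives, $A\cap\mathbb{S}^n$ would contain all $n+3$ colors, and enlarging $A$ to a hyperplane would produce an $(n-1)$-sphere carrying all $n+3\ge n+2$ colors, contradicting the hypothesis. This single step is where — and why — the hypothesis is used, and I expect it to be the main obstacle to make fully rigorous (handling the degenerate possibilities that color classes are tiny or confined to lower-dimensional subspheres, all of which the hypothesis rules out).

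\emph{The sign argument.} Now append any representative $c_{n+3}$ of the last color. Since $c_1,\dots,c_{n+2}$ already span $\R^{n+1}$ affinely, the space of affine dependences of the full tuple is one-dimensional: there is an essentially unique relation $\sum_{k=1}^{n+3}\lambda_k c_k=0$ with $\sum_k\lambda_k=0$. This relation cannot be supported on only two indices, since that would force two of the distinct points $c_k$ to coincide; hence at least three $\lambda_k$ are nonzero, and among three nonzero reals two must share a sign. Choose such indices $i,j$, so that $\lambda_i,\lambda_j$ are nonzero and of equal sign. Because $\lambda_i\neq0$, the support of $\lambda$ is not contained in the complement $\{k\neq i,j\}$, so no nonzero affine relation is supported there; thus the $n+1$ points $\{c_k:k\neq i,j\}$ are affinely independent and span a hyperplane $H$.

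\emph{Conclusion.} The sphere $S=H\cap\mathbb{S}^n$ carries the $n+1$ colors of $\{c_k:k\neq i,j\}$, and by hypothesis exactly these. Let $f$ be an affine functional vanishing on $H$. Applying $f$ to the relation and using $\sum_k\lambda_k=0$ (so the constant term of $f$ cancels) leaves $\lambda_i f(c_i)+\lambda_j f(c_j)=0$; here $f(c_i),f(c_j)\neq0$, for if one vanished the identity would force both to vanish, placing all $n+3$ points in $H$ and contradicting that $c_1,\dots,c_{n+2}$ span $\R^{n+1}$. Since $\lambda_i,\lambda_j$ have equal sign, $f(c_i)$ and $f(c_j)$ have opposite signs, so $c_i$ and $c_j$ lie strictly on opposite sides of $H$ and are therefore separated by $S$. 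Taking $x_1=c_i$ and $x_2=c_j$, which carry the two colors missing from $S$, completes the argument; note that it runs uniformly for all $n\ge2$.
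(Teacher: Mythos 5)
Your proof is correct, and it takes a genuinely different route from the paper's. The paper proves the lemma by a geometric case analysis ($n=2$, $n=3$, $n>3$): it uses a M\"{o}bius transformation to place three representatives of three of the colors on a line with one lying between the other two, and then finds the separating sphere either as the extended flat through $n+1$ representatives (when the two remaining points lie on opposite sides of it) or, via an inscribed-angle comparison---reduced in higher dimensions to a planar cross-section by a rotation-invariance argument---as the sphere through one remaining representative and the $n$ points on that flat. You instead work in the ambient $\R^{n+1}$: a greedy argument (the only place the hypothesis enters, as you correctly flag) produces $n+2$ affinely independent representatives, and a Radon-type sign analysis of the essentially unique affine dependence among all $n+3$ representatives hands you two indices with coefficients of equal sign, whose points are then separated by the hyperplane spanned by the other $n+1$ representatives. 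Your approach buys uniformity in the dimension (no case split), dispenses entirely with M\"{o}bius normalization, inscribed angles and rotations, and is shorter and easier to make fully rigorous than the paper's $n>3$ case; what it forgoes is the explicit configuration (points on a line, inside/outside a circle) whose style the paper reuses as a template in the subsequent proofs of the main theorem. Two small points you should write out when finalizing: any hyperplane containing at least two points of $\mathbb{S}^n$ meets it in a genuine $(n-1)$-sphere rather than a tangency point (your hyperplanes contain $n+1\geq 3$ distinct colored points, and at least $n+3$ in the stuck case of the greedy step, so no degeneracy occurs), and the hypothesis caps $\Gamma(S)$ at $n+1$ colors, so $S$ carries exactly the $n+1$ colors of its spanning points and the two separated points carry the remaining two colors, matching the precise wording of the lemma.
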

\begin{proof}

We will consider 3 cases: $n=2$, $n=3$, $n>3$ separately. The case $n=3$ and the case $n>3$ can be treated together, however we feel that it is better to discuss the case $n=3$ first as it gives geometric meaning of the methods in higher dimensions. \\

Case $n=2$:

Let $x_1,x_4,x_5$ be three points colored by color 1,4 and 5. Under a \m transformation, we may assume that $x_1,x_4,x_5$ lie on a line and $x_1$ lies between $x_4$ and $x_5$ (recall here that we have identified $\mathbb{S}^2$ with $\R^2_\infty$). Denote this line by $[x_1x_4x_5]$.

Let $x_2,x_3$ be two points colored by 2 and 3, then $x_2,x_3\notin [x_1x_4x_5]\cup \{\infty\}$, as otherwise the circle $(x_1x_4x_5) = [x_1x_4x_5]\cup\{\infty\}$ contains 4 distinct colors.

If $x_2$ and $x_3$ lie on the opposite side of the line $[x_1x_4x_5]$ in $\R^2$, then $x_2$ and $x_3$ are separated by the circle $[x_1x_4x_5]\cup\{\infty\}$ in $ \R^2_\infty (=\mathbb{S}^2)$.

Otherwise, $x_2$ and $x_3$ lie on the same side of the line $(x_1x_4x_5)$ in $\R^2$.

Notice that $\angle x_4x_2x_5\neq \angle x_4x_3x_5$, as otherwise, the four points $x_2,x_3,x_4,x_5$ will be on a same circle. Without loss of generality, assume $\angle x_4x_2x_5< \angle x_4x_3x_5$, then the point $x_2$ is outside the circle $(x_3x_4x_5)$. Since $x_1$ is between $x_4$ and $x_5$, $x_1$ is inside the circle $(x_3x_4x_5)$. Hence, $x_1$ and $x_2$ are separated by the circle $(x_3x_4x_5)$ (see Figure~\ref{fig1}).
\begin{figure}[h!]
\begin{center}
\includegraphics[width=70mm]{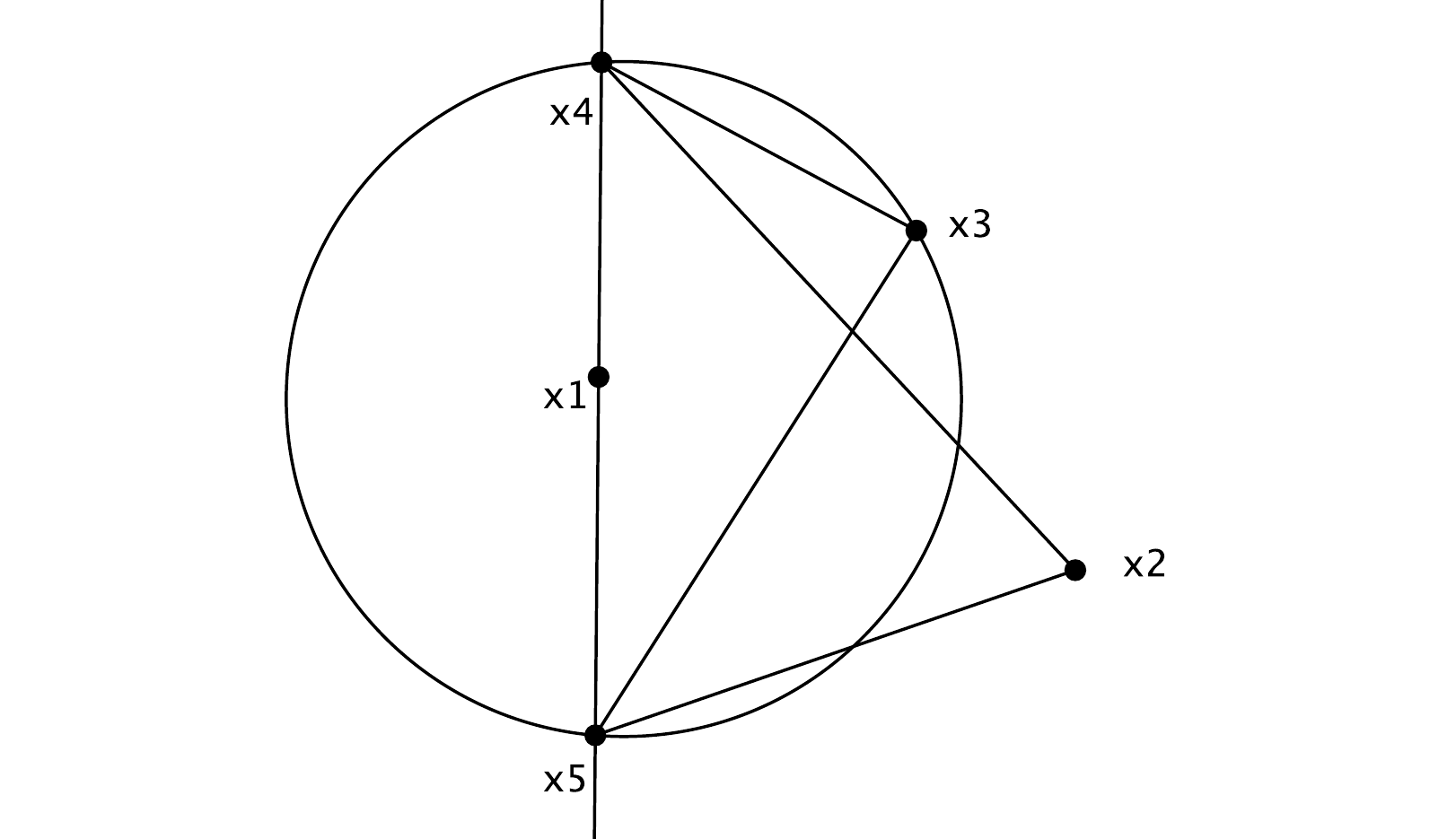}
\end{center}
\caption{Two Points Separated by a Circle}
\label{fig1}
\end{figure}

Before proving the case $n=3$ and $n>3$, we shall prove the following observation.\\

\begin{bf}
Observation:
\end{bf}
If $\mathbb{S}^n$ is full $(n+3)$-colored with no $(n-1)$-sphere containing $(n+2)$ or more colors, then there is no $k$-sphere containing $(k+3)$ colors for all $k = 2,3,...,n-2$.

\begin{proof} (of the observation:)

To see this, we notice that suppose for contradiction that we have a $(k+3)$-colored $k$-sphere $S_k$, then we can form $(k+1)$-sphere using this $S_k$ and a point with different color to get a $(k+4)$-colored $(k+1)$-sphere. So inductively, we would get a $(n+2)$-colored $(n-1)$-sphere which is a contradiction.

\end{proof}

Case $n=3$:

Let $x_1,x_5,x_6 \in \R^3_\infty$ colored by 1, 5 and 6. Under a \m transformation, we may assume that $x_1,x_5,x_6$ lie on a line and $x_1$ lies between $x_5$ and $x_6$. Denote this line, as usual, by $[x_1x_5x_6]$.

Let $x_2,x_3,x_4 \in \R^3_\infty$ colored by 2, 3 and 4. Notice that $x_2,x_3,x_4\notin [x_1x_5x_6] \cup\{\infty\}$, so $[x_1x_4x_5x_6]$ is a 2-plane.

If $x_2$ and $x_3$ lie on the opposite side of the 2-plane $[x_1x_4x_5x_6]$ in $\R^3$, then $x_2$ and $x_3$ are separated by the $2$-sphere $[x_1x_4x_5x_6]\cup\{\infty\}$.

Otherwise, $x_2$ and $x_3$ lie on the same side of the plane $[x_1x_4x_5x_6]$ in $\R^3$

Notice that $x_4, x_5, x_6$ do not lie in a line, so $(x_4x_5x_6)$ is a Euclidean circle in $\R^2$. Let $O$ be the center of this circle. We let $l$ be the line perpendicular to the plane $[x_1x_4x_5x_6]$ through the point $O$. Let $A, B$ be two points on the circle $(x_4x_5x_6)$ such that $O$ is on $AB$. We rotate $x_2$ and $x_3$ about the line $l$ to $x_2'$ and $x_3'$ so that $x_2'$ and $x_3'$ lie on the plane spanned by $AB$ and $l$. Then any sphere containing the circle $(x_4x_5x_6)$ has its center on $l$, so the sphere is invariant under rotation about $l$. Hence, the point $x_2$ or $x_3$ is inside, on or outside a sphere containing $(x_4x_5x_6)$ if and only if $x_2'$ or $x_3'$ is.

Now working on the plane $[l,A,B]$, notice $x_2'$ and $x_3'$ are on the same side of the line $AB$. Also notice that $\angle Ax_2'B \neq \angle Ax_3'B$ as otherwise, $x_2, x_3, x_4, x_5, x_6$ will be on a $2$-sphere. Without loss of generality, assume $\angle Ax_2'B < \angle Ax_3'B$, then $x_2'$ is outside the circle $(x_3'AB)$. Notice that $(ABx_3')$ is a great circle of the sphere $(x_3'x_4x_5x_6) = (x_3x_4x_5x_6)$, so $x_2'$ is outside the sphere $(x_3x_4x_5x_6)$ (see Figure~\ref{fig2}). Hence $x_2$ is outside the sphere $(x_3x_4x_5x_6)$. Notice that $x_1$ is between $x_5$ and $x_6$, so $x_1$ is inside the sphere $(x_3x_4x_5x_6)$. Therefore, $x_1$ and $x_2$ are separated by $(x_3x_4x_5x_6)$.\\
\begin{figure}[h!]
\begin{center}
\includegraphics[width=70mm]{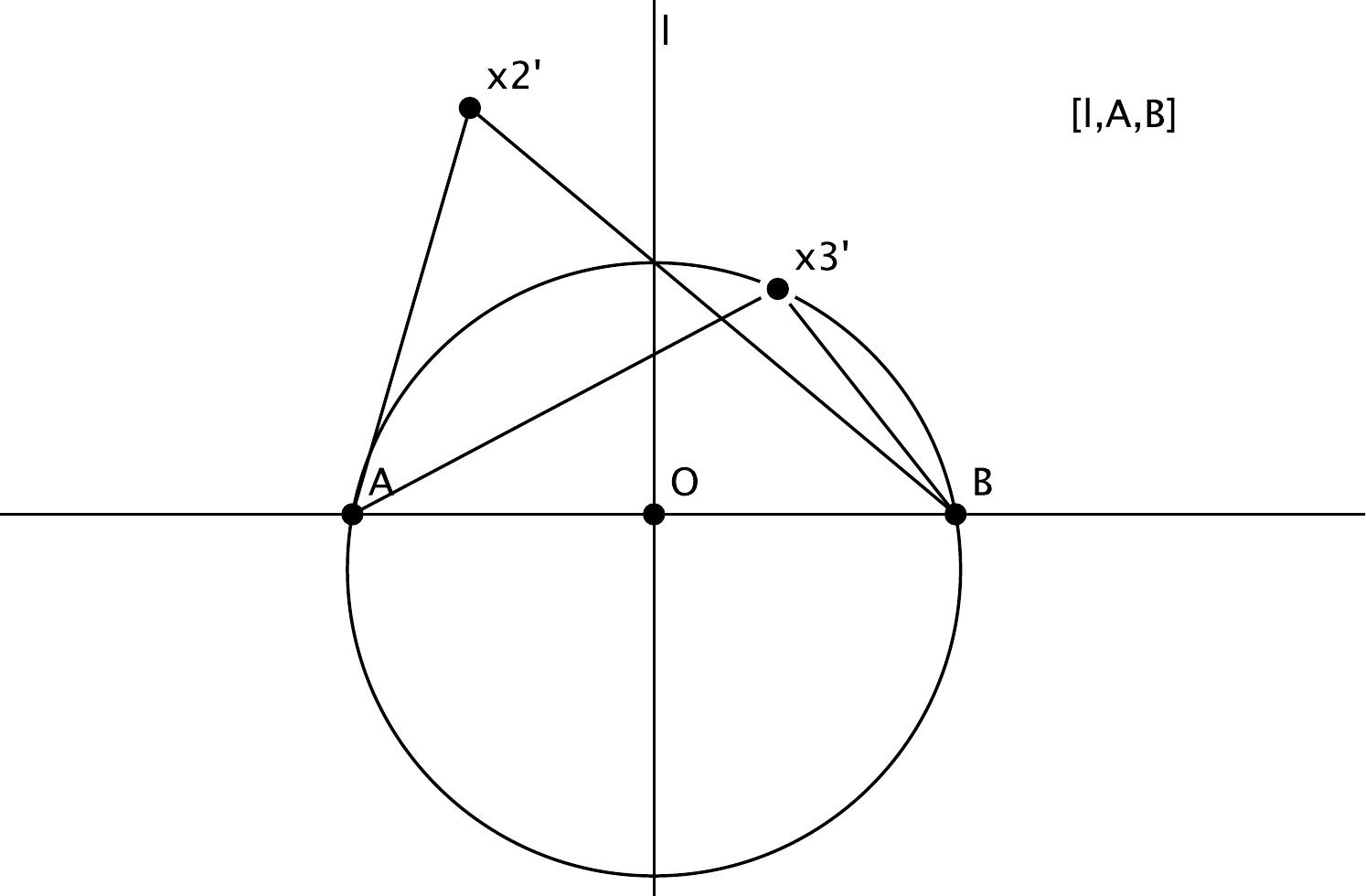}
\end{center}
\caption{Cross Section of the Sphere}
\label{fig2}
\end{figure}

Case $n>3$:

The proof of this case is essentially the same as the case $n=3$. We first find three points $x_1,x_{n+2},x_{n+3}$ of color $1, n+2$ and $n+3$. Under a \m transformation, we may assume that $x_1,x_{n+2},x_{n+3}$ lie on a line and $x_1$ lies between $x_{n+2}$ and $x_{n+3}$. Denote this line, as usual, by $[x_1x_{n+2}x_{n+3}]$.

Let $x_2,...,x_{n+1}$ be points of color $2,...,n+1$ respectively. Notice that $x_2,...,x_{n+1}\notin [x_1x_5x_6] \cup\{\infty\}$. Now consider the affine space $[x_1x_4x_5...x_{n+3}]$. $[x_1x_4x_5...x_{n+3}]$ can have dimension at most $n-1$ as $x_1, x_{n+2}$ and $x_{n+3}$ are on a line. $[x_1x_4x_5...x_{n+3}]$ also cannot have dimension $n-2$ or less as otherwise, $[x_1x_4x_5...x_{n+3}]\cup\{\infty\}$ is a sphere of dimension $n-2$ or less with at least $n+1$ colors. Therefore, $[x_1x_4x_5...x_{n+3}]$ is a hyperplane.

If $x_2$ and $x_3$ lie on the opposite side of the hyperplane $[x_1x_4x_5...x_{n+3}]$ in $\R^n$, then $x_2$ and $x_3$ are separated by $[x_1x_4x_5...x_{n+3}]\cup\{\infty\}$.

Otherwise, $x_2$ and $x_3$ lie on the same side of the hyperplane $[x_1x_4x_5...x_{n+3}]$ in $\R^n$

Notice that $[x_4x_5...x_{n+3}]$ is again a hyperplane as $x_1, x_{n+2}, x_{n+3}$ are on a line. Therefore, $(x_4x_5...x_{n+3})$ is either an extended $(n-1)$-plane or a Euclidean $(n-2)$-sphere. But any $n$ points can determine a sphere of dimension at most $(n-2)$, so $(x_4x_5...x_{n+3})$ is a Euclidean $(n-2)$-sphere. Let $O$ be the center of $(x_4x_5...x_{n+3})$, $l$ be the line perpendicular to the hyperplane $[x_1x_4x_5...x_{n+3}]$ through $O$ and $A,B$ be two points on $(x_4x_5...x_{n+3})$ such that $O$ is on $AB$.

In dimension $n\geq 4$, we define a rotation about a line to be the rotation in $\R^n$ that preserves all hyperplanes perpendicular to the line, and we should call this line as the axis of rotation. E.g., a rotation about the $X_1$-axis is a matrix of the form
\[ \left( \begin{array}{cc}
1 & 0 \\
0 & SO_{n-1}(\R) \end{array} \right)\]

Notice that the orbit of a point under all the rotation about a line is a $(n-2)$-sphere contained in some hyperplane perpendicular to the axis of rotation. 

Let $P$ be the hyperplane through $x_2$ that perpendicular to $l$, then $[l,A,B]\cap P$ is a line passing through the center of the orbit of $x_2$ under rotation about $l$. Hence, $[l,A,B]$ intersects the orbit of $x_2$. Similar result holds for $x_3$. Therefore, we may rotate $x_2$ and $x_3$ about $l$ to get $x_2'$ and $x_3'$ which are on the plane $[l,A,B]$. Similar to the case $n=3$,  the center of any $(n-1)$-sphere containing $(x_4x_5...x_{n+3})$ lies on $l$, and the the orbit of $x_2$ (or $x_3$) is an $(n-2)$-spheres perpendicular to $l$ with center on $l$, so $x_2$ (or $x_3$) is inside, on or outside $(n-1)$-sphere containing $(x_4x_5...x_{n+3})$ if and only if $x_2'$ (or $x_3'$) is.

Now working on the plane $[l,A,B]$, following exactly the same argument in the case $n=3$, if we choose the point with larger angle (say $x_3'$), we have that $x_2$ is outside the $(n-1)$-sphere $(x_3x_4...x_{n+3})$. But $x_1$ is between $x_{n+2}$ and $x_{n+3}$, so it is inside the $(n-1)$-sphere $(x_3x_4...x_{n+3})$. Hence, $x_1$ and $x_2$ are separated by $(x_3x_4...x_{n+3})$.
\end{proof}

Now it is easy to prove Theorem \ref{th11b} for $n \ge 3$.

\begin{proof} (of Theorem \ref{th11b} in dimension $\geq 3$)

It is easy to see that we only need to prove when $k = n+3$, the case $k>n+3$ follows immediately from the case $k = n+3$.

We suppose for contradiction that there is no $(n-1)$-sphere containing $n+2$ colors.

We will first consider the case when $n=3$

By lemma \ref{separate-lem}, there is a 4-colored $2$-sphere $S$ (say it is colored by 3,4,5,6) and two points (say $x_1, x_2$ of color 1,2) such that $x_1$ and $x_2$ are separated by $S$. Now the line $[x_1x_2]$ necessarily intersects the sphere at two points of the same color, as otherwise, $[x_1x_2]$ will contain 4 colors. Without loss of generality, we assume that two intersection points are of color 3. Under a \m transformation, we may map one of the intersections to infinity and the other to $\vec 0$ and $x_1$ to $(1,0,0)$. Under this transformation, the $2$-sphere $S$ is mapped to an extended 2-plane through $\vec 0$, and $\vec 0$ is between $x_1$ and $x_2$.

Let $x_4,x_5,x_6$ be on this extended plane $S$ having color 4,5 and 6. We claim the line $[\vec 0x_6]$ must be bicolored, i.e., $[\vec0 x_6]$ is colored by color 3 and 6 only. Suppose not, if either color 1 or color 2 is on $[\vec 0x_6]$, then the extended 2-plane $S$ will have 5 colors. Otherwise, either color 4 or color 5 is on $[0x_6]$, then extended plane $(\vec 0x_1x_2x_6) = [\vec 0x_1x_2x_6]\cup\{\infty\}$ will have 5 colors. This is a contradiction. Hence $[\vec 0x_6]$ is bicolored.

Now if $x_1,x_2,x_4,x_5$ lie on a same plane, then we will have the extended plane $ [x_1x_2x_4x_5]\cup\{\infty\}$, which has dimension $\leq 2$, contains color 1,2,3,4,5.

Otherwise $(x_1x_2x_4x_5)$ is a Euclidean sphere, the line $[\vec 0x_6]$ intersects $(x_1x_2x_4x_5)$ at two points. These two points must be colored by 3 or 6. So $(x_1x_2x_4x_5)$ will have at least 5 colors. This is a contradiction, and the case for $n=3$ follows.\\

For the case $n>3$, the argument goes in a similar way. First by lemma \ref{separate-lem}, there is a $(n+1)$-colored $(n-1)$-sphere $S$ (say it is colored by $3,4,...,n+3$) and two points (say $x_1$ and $x_2$ of color 1 and 2) such that $x_1$ and $x_2$ are separated by $S$. The line $[x_1x_2]$ must intersect $S$ at two points of same color (say the color is 3). Under a \m transformation, we map one of the intersections to infinity and the other to $\vec 0$ and $x_1$ to $(1,0,...,0)$. Under this transformation, $S$ is mapped to an extended $(n-1)$-plane through $\vec 0$ and $\vec 0$ is between $x_1$ and $x_2$.

Let $x_4,...,x_{n+3}$ be on this extended plane $S$ having color $4,...,n+3$. We claim that $[\vec 0x_{n+3}]$ is bicolored. Suppose not, if either color 1 or 2 is on $[\vec 0x_{n+3}]$, then the extended $(n-1)$-plane $S$ will contain at least $n+2$ colors. Otherwise, one of color $3,...,n+2$ is on $[0x_{n+3}]$, then the extended 2-plane $(\vec 0x_1x_2x_{n+3}) = [\vec 0x_1x_2x_{n+3}]\cup\{\infty\}$ contains at least 5 colors. This is a contradiction to the observation. Hence $[\vec 0x_{n+3}]$ is bicolored.

Now if $x_1,x_2,x_4,x_5,..,x_{n+2}$ lie on a same $(n-1)$-plane, then the extended plane $[x_1x_2x_4x_5...x_{n+2}] \cup\{\infty\}$, which has dimension $\leq n-1$, contains color $1,2,...,n+2$.

Otherwise, $(x_1x_2x_4x_5...x_{n+2})$ is a Euclidean $(n-1)$-sphere, the line $[0x_{n+3}]$ intersects $(x_1x_2x_4x_5...x_{n+2})$ at two points. These two points must be colored by $3$ or $n+3$. So $(x_1x_2x_4x_5...x_{n+2})$ will have at least $n+2$ colors. This is a contradiction, and the the case for $n>3$ follows.\\
\end{proof}

\section{Proof of Theorem \ref{th11b} for $n=2$}\label{sec3}

The proof of the theorem for $n=2$ is different from the case of $n\geq 3$ and more complicated. For convenience
we restate Theorem \ref{th11b}  in this case. 


\begin{theorem}\label{th31} 
Any full $n$-coloring with $n \ge 5$ of the standard $2$-sphere ${\mathbb S}^2$ must have some circle containing at least $4$
different  colors.
\end{theorem}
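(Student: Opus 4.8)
The plan is to argue by contradiction, following the skeleton of the higher-dimensional proof of Section~\ref{sec2} but replacing its final step, which genuinely breaks when $n=2$. As before it suffices to treat the case of exactly five colors: assume $\mathbb{S}^2$ is fully $5$-colored and that \emph{every} circle carries at most $3$ colors. By Lemma~\ref{separate-lem} in the case $n=2$, there is a circle $S$ carrying exactly three colors, say $\{3,4,5\}$, together with points $x_1,x_2$ of colors $1,2$ that are separated by $S$.

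First I would normalize by a \m transformation sending $x_1\mapsto 0$ and $x_2\mapsto\infty$; this is harmless because ``some circle carries four colors'' is a \m-invariant assertion. Then $S$ becomes a Euclidean circle with $0$ in its interior and $\infty$ outside, while the circles through $x_1,x_2$ become exactly the Euclidean lines through $0$. Each such line meets $S$ in precisely two points (as $0$ is interior), and these two points must have the \emph{same} color: otherwise the line would carry color $1$ (at $0$), color $2$ (at $\infty$), and two distinct colors among $\{3,4,5\}$, a total of four colors. This produces a fixed-point-free involution $\sigma$ of $S\cong\mathbb{S}^1$ (sending a point to the second intersection of its line through $0$ with $S$), each of whose three color classes is $\sigma$-invariant; equivalently, the quotient circle $S/\sigma$ is partitioned into three nonempty color sets. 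Moreover every line through $0$ is \emph{tricolored} $\{1,2,c\}$ for its unique $S$-color $c$, and hence each color $i\in\{3,4,5\}$ is confined to the cone $U_i$, the union of those lines through $0$ whose $S$-color is $i$: any point of color $i$ other than $0,\infty$ lies on a line tricolored $\{1,2,c\}$, forcing $c=i$.

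It then remains to manufacture a four-colored circle from this rigid picture. The natural devices are circles through $0$ meeting $S$ in two points of distinct colors $\alpha,\beta\in\{3,4,5\}$, which are tricolored $\{1,\alpha,\beta\}$, and lines meeting $S$ in two points of distinct colors $\alpha,\beta$ (circles through $\infty$), which are tricolored $\{2,\alpha,\beta\}$; sweeping such a circle or line across the cones $U_3,U_4,U_5$ in the cyclic order prescribed by the three-coloring of $S/\sigma$ should force a third cone-color to appear on it. I expect the main obstacle to be precisely colors $1$ and $2$: unlike $3,4,5$ they are \emph{not} confined to any cone and may freely occupy the very points at which one hopes a third cone-color would surface, and since the coloring is not assumed measurable no naive continuity argument is available. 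The heart of the proof will therefore be a careful case analysis on the cyclic arrangement of colors $3,4,5$ around $S/\sigma$, combined with the use of several circles through $0$ and through $\infty$ together with the symmetry $\sigma$ to pin down where colors $1$ and $2$ can sit, and then routing a single circle through a color-$1$ (or color-$2$) point and colors drawn from two distinct cones.

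Finally, it is worth recording why the dimension matters. In Section~\ref{sec2} the contradiction for $n\ge 3$ came from a bicolored line $[\vec 0\,x_{n+3}]$ lying \emph{properly} inside the hyperplane $S$ and meeting an auxiliary $(n-1)$-sphere in a fresh color. When $n=2$ the set $S$ is itself a line through $0$, so that bicolored line coincides with $S$ and the argument collapses; this is exactly the point at which the new ingredients above, the involution $\sigma$ and the cone confinement, become necessary.
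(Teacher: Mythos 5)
Your setup is sound as far as it goes: the normalization $x_1\mapsto \vec 0$, $x_2\mapsto\infty$, the observation that each line through the origin meets $S$ in two points of equal color (otherwise four colors appear on that line), the resulting fixed-point-free involution $\sigma$, and the confinement of each color $i\in\{3,4,5\}$ to its cone $U_i$ are all correct, and your diagnosis of why the $n\ge 3$ argument of Section~\ref{sec2} collapses at $n=2$ is accurate. But the proof stops exactly where the theorem actually lives. Everything after ``It then remains to manufacture a four-colored circle'' is a statement of intent, not an argument: ``sweeping \dots should force a third cone-color to appear,'' ``I expect the main obstacle,'' ``the heart of the proof will therefore be a careful case analysis.'' That case analysis is never carried out, so no contradiction is ever derived; as written, the proposal is a correct preamble followed by a plan.

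The missing step is not a routine finish; it is essentially the entire content of the paper's Section~\ref{sec3}. The paper uses a different normalization --- it sends the two \emph{same-colored} intersection points of the line $[s_4s_5]$ with the $3$-colored circle to $\vec 0$ and $\infty$, so that both distinguished points carry color $1$ and the four remaining colors sit in pairs on two lines through the origin --- and this is what makes an algebraic handle available: concyclicity of $a,a',b,b'$ on the two lines becomes the equation $xx'=yy'$ in signed norms. From this the paper extracts a genuine group structure (Claim~\ref{group-lem}: $X_5$ is a subgroup $G$ of $\mathbb{R}^*$ and $Y_2,Y_3,X_4$ are cosets whose representatives have fourth powers in $G$), then proves every line through the origin is colored $\{1,2,3\}$ or $\{1,4,5\}$ (Claims~\ref{3-color-line-lem}--\ref{intersect-lem}), and finally reaches the contradiction only by a global density argument: $H=\cup_{x^4\in G}\,xG$ contains a nondegenerate interval of norms (from a circle avoiding color $1$), forcing $\mathbb{R}^+\subset G$ and then $G=\mathbb{R}^*$, which kills color $4$. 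Two facts bear directly on your plan. First, Proposition~\ref{pr53} shows that rigidity constraints of exactly this flavor are \emph{consistent}: two intersecting lines admit a full $5$-coloring, satisfying all the coset constraints, from which no circle collects four colors; so no finite case analysis confined to $S$, the cones, and a handful of auxiliary circles can succeed --- some global input is unavoidable. Second, your normalization discards the shared-color feature that generates the signed-norm algebra, and in your polar picture colors $1$ and $2$ remain entirely unconstrained inside the cones, as you yourself concede. So the gap you left is the hard part of the theorem, and your chosen coordinates make it harder, not easier, to close.
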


\begin{proof}

Recall that we identify $\mathbb{S}^2$ with $\R^2_\infty$. We will argue by
contradiction, and suppose for contradiction that there is a $5$-coloring $\Gamma:\R^2_\infty \longrightarrow \{1,...,5\}$ with no $4$-colored circle, and denote $\Gamma_i :=\Gamma^{-1}(i)$.
We shall derive more and more constraints on this coloring, doing this in a series of claims,  until we obtain a contradiction.

\begin{figure}
\begin{center}
\begin{subfigure}[h!]{0.4\textwidth}
\centering
\includegraphics[width=50mm]{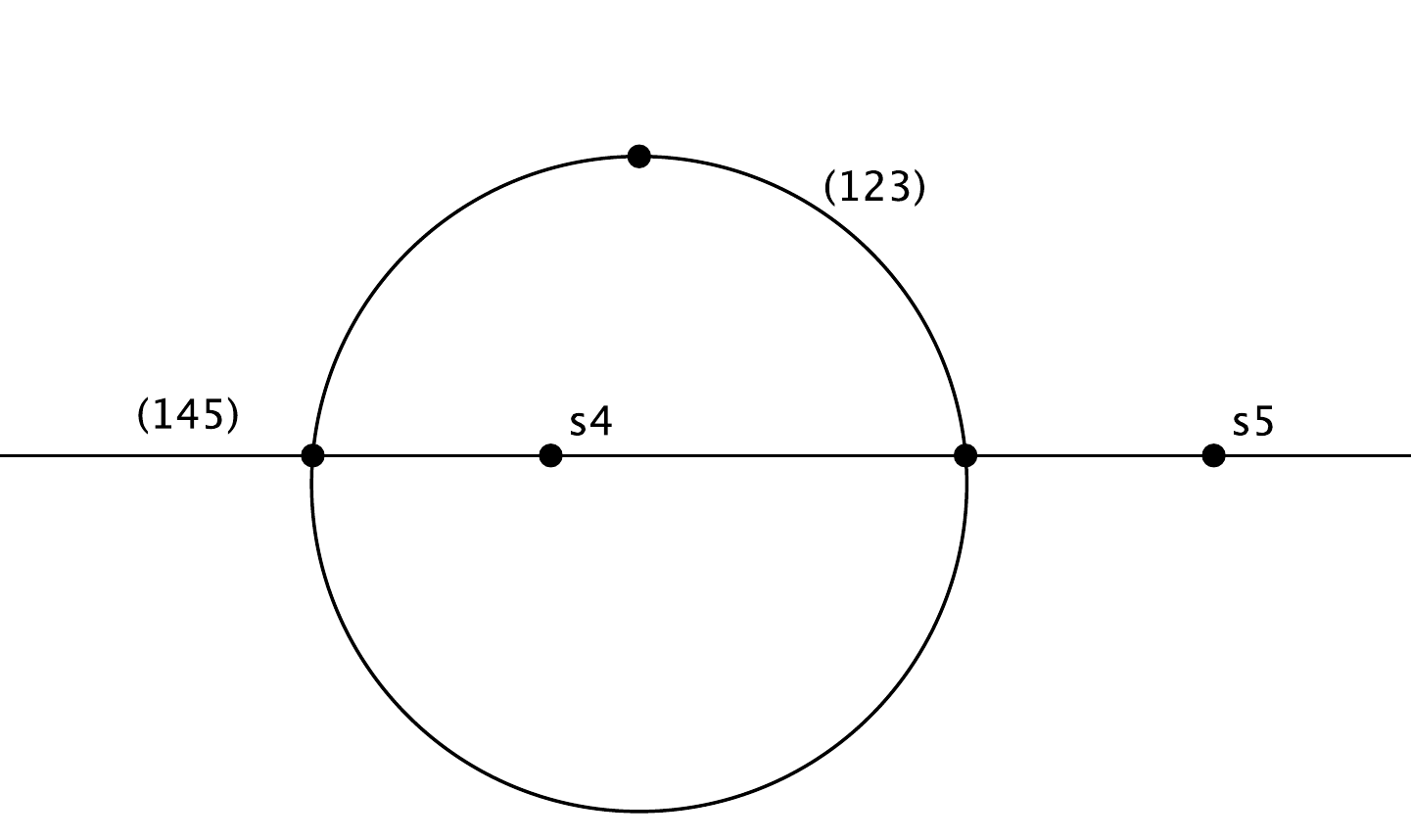}
\caption{Before Transformation}
\label{fig3a}
\end{subfigure}
\begin{subfigure}[h!]{0.4\textwidth}
\centering
\includegraphics[width=50mm]{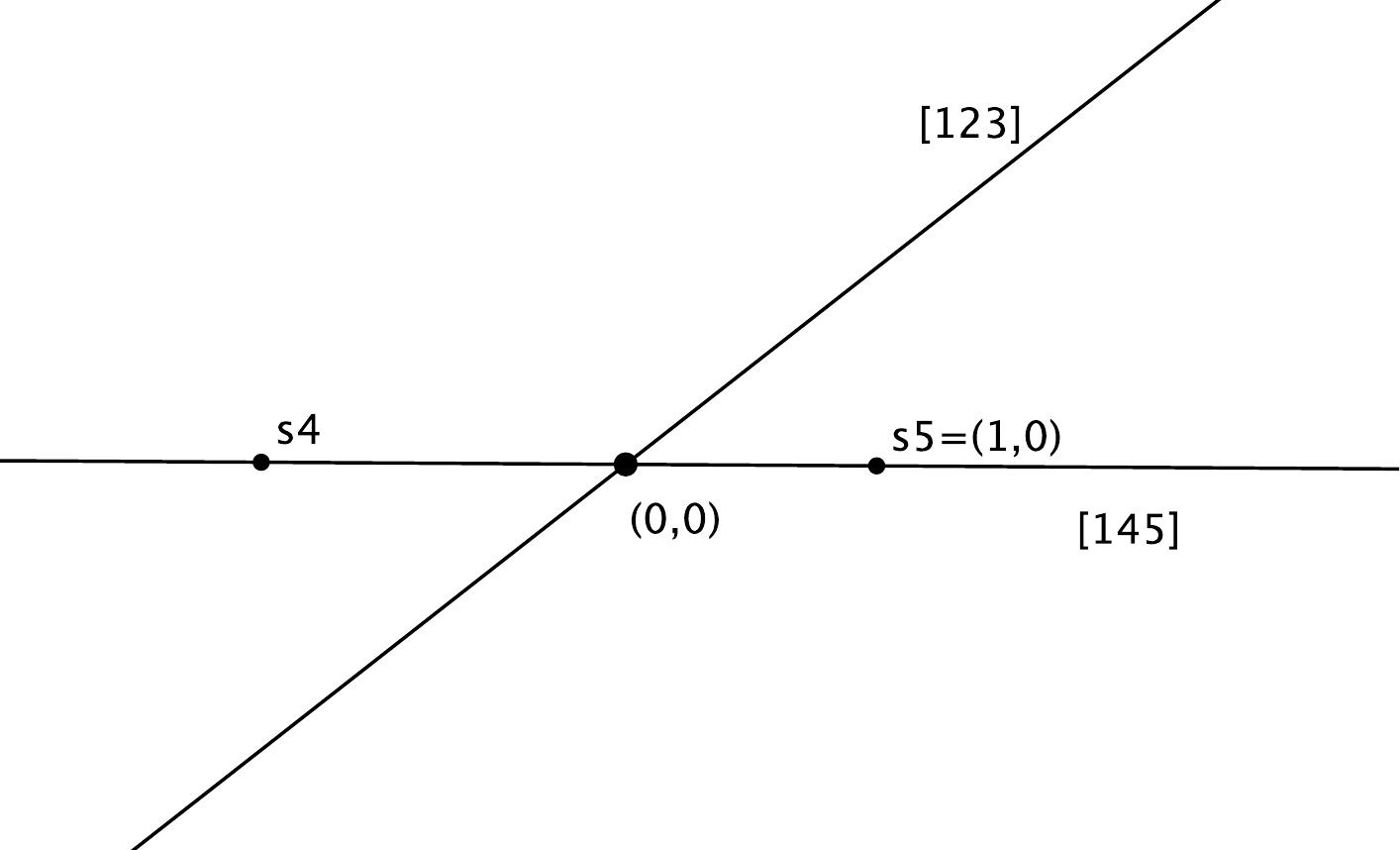}
\caption{After Transformation}
\label{fig3b}
\end{subfigure}
\end{center}
\caption{}
\label{fig3}
\end{figure}

To begin,  by Lemma \ref{separate-lem}, this $5$-coloring must contain  a 3-colored circle 
that separate two points of the other two colors. Without loss of generality, we assume that this circle is colored by 
colors 1,2 and 3, so we denote it by
$(123)$, and that there are 
two points $s_4$ and $s_5$  of colors $4$ and $5$, respectively, 
which are separated by the circle $(123)$. Consider the line 
$[s_4s_5]$, it necessarily intersects the circle $(123)$ at two points of the same color (say the color is $1$). 
We label the extended line $[s_4s_5]\cup\{\infty\}$ by its colors $(145)$.
Under a \m transformation, we map one of the intersections to $\vec 0$ and the other to $\infty$ and $s_5$ to $(1,0)$. Under this transformation, the circle $(123)$ is mapped to an extended line through origin,  the extended line $(145)$
is mapped  to the extended X-axis. We will also denote the Euclidean line after the transformation $[123] :=(123)-\{\infty\}$ and $[145]:=(145)-\{\infty\}$. (see Figure~\ref{fig3})

\begin{cl} \label{two-sided-lem}
$\Gamma_i$ intersects $[145]$ on both sides of $0$ and $\Gamma_j$ intersects $[123]$ on both sides of $0$ ($i=4,5$ and $j = 2,3$)
\end{cl}
\begin{proof} (of Claim \ref{two-sided-lem})

By the construction, there are points of color 5 on right hand side of $\vec 0$ (namely $s_5 = (1,0)$) and points of color 4 on left hand side of $\vec 0$ (namely $s_4$). We let $b_3$ be a point of color 3 on $[123]$, then the circle $(s_4s_5b_3)$ intersects the line $[123]$ at another point on the opposite side of $b_3$. The intersection point must be of color 3, so there are color $3$ on either side of $\vec 0$ of $[123]$. Similarly, the claim holds for color 2.

Now let $b_2$ and $b_3$ be on the opposite side of $\vec 0$ on $[123]$ of color 2 and 3, then the circle $(s_5b_2b_3)$ intersects the line $[145]$ at another point on the opposite side of $s_5$ and having color 5. Hence there are points of color 5 on either side of $\vec 0$ on $[145]$. Similarly, the claim holds for color 4.
\end{proof}

We need some extra notation to describe colored points on the two lines $[123]$ and $[145]$.
To distinguish the points on the line $[145]$ and $[123]$, we will denote an arbitrary point on the line $[145]$ by $a$ and points of color 4 and 5 on $[145]$ by $a_4$ and $a_5$. Similarly, we denote an arbitrary point on the line $[123]$ by $b$ and points of color 2 and 3 on $[123]$ by $b_2$ and $b_3$.

\begin{defn}
We define a  {\em signed norm function} $N:\R^2\longrightarrow \R$ by assigning to
$z= (v, w) \in \R^2$ the value
\begin{align*}
N(z) &:= sign(v,w)\|(v,w)\| \\
&= sign(v,w) \sqrt{v^2+w^2},
\end{align*}
where  $sign(z) = sign(v,w)$ is defined as
\begin{displaymath}
   sign(v,w) = \left\{
     \begin{array}{lr}
       1 & \text{if } w>0 \text{ or } (w=0 \,  \& \, v\geq 0)\\
       -1 & \text{if } w<0 \text{ or } (w=0 \, \& \, v<0)\\
     \end{array}
   \right.
\end{displaymath} 
 If $z$ is viewed as a complex number, then $sign(z)>0$ if and only if $Im(z)>0$ or $z$ is non-negative real.

\end{defn}

Given $a\in [145]$ and $b\in [123]$, we will denote $x = N(a)$ and $y = N(b)$, that is, $x$ and $y$ will be used to denote signed norm of points on $[145]$ and points on $[123]$ respectively.  We will also denote 
\begin{align*}
X_i &:= \{x\in \mathbb{R}^*: x \text{ is signed norm of some point on } [145] \text{ of color i}\}\\
Y_j &:= \{y \in \mathbb{R}^*: y \text{ is signed norm of some point on } [123] \text{ of color j}\}
\end{align*}
for $i = 4,5$ and $j= 2,3$.\\

The following three claims will show that there is some multiplicative structure on the sets $X_i$ and $Y_j$.

From elementary geometry, we know that four points $a,a',b,b'$ are on the same circle if and only if the associated signed norms satisfy $xx' = yy'$. We now define a function $h:(\mathbb{R}^*)^3 \longrightarrow \mathbb{R}^*$ by $h(r_1|r_2r_3) = \frac{r_2r_3}{r_1}$. 

\begin{cl}\label{functionh-lem}
If $x_i \in X_i$, $y_2\in Y_2$ and $y_3\in Y_3$, then $h(x_i|y_2y_3) \in X_i$. Similarly, if $y_j \in Y_j$, $x_4 \in X_4$ and $x_5 \in X_5$, then $h(y_j|x_4x_5) \in Y_j$. (i=4,5 and j = 2,3).
\end{cl}
\begin{proof} (of Claim \ref{functionh-lem})

Let $b_2, b_3$ be the points on $[123]$ with signed norm $y_2, y_3$ and $a_i, a$ be the points on $[145]$ with signed norm $x_i, h(x_i|y_2y_3)$ respectively. Notice that 
\begin{align*}
x_i h(x_i|y_2y_3) &= x_i \frac{y_2y_3}{x_i}\\
&= y_2y_3
\end{align*}
so $a_i,a,b_2,b_3$ are on the same circle. Hence $a$ has color i. This proves that $h(x_i|y_2y_3)\in X_i$. (see Figure~\ref{fig4})

\begin{figure}[h!]
\begin{center}
\includegraphics[width=70mm]{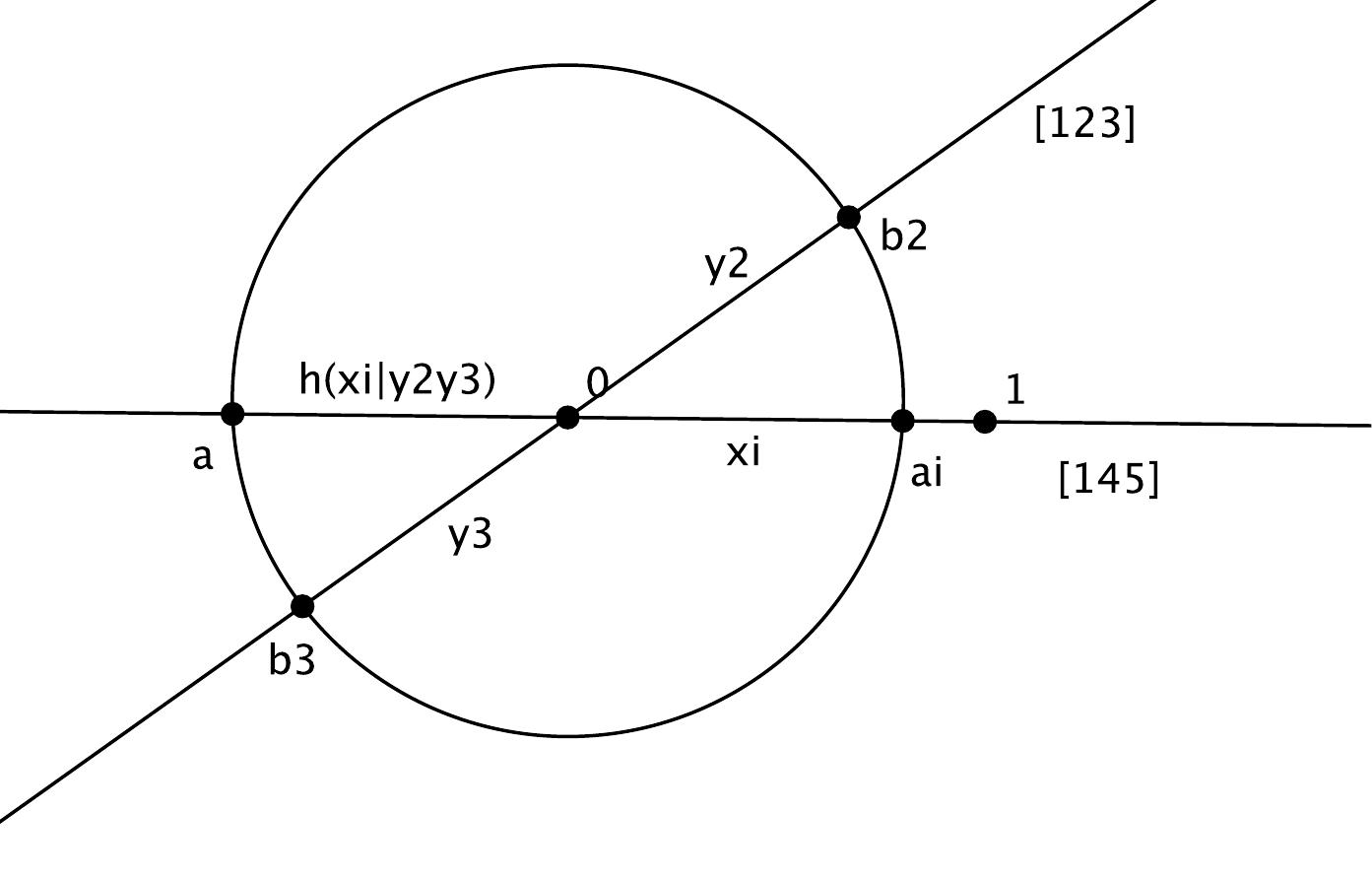}
\end{center}
\caption{}
\label{fig4}
\end{figure}

Similarly, the second part of the claim follows from the same argument.
\end{proof}

We define another function $m:(\mathbb{R}^*)^3 \longrightarrow \mathbb{R}^*$ by $m(r_1|r_2r_3) =\frac{ r_3}{r_2}r_1$. 

\begin{cl}\label{functionm-lem}
If $x_i \in X_i$, $y_j, y_j'\in Y_j$, then $m(x_i|y_jy_j') \in X_i$. Similarly, if $y_j \in Y_j$, $x_i, x_i' \in X_i$, then $m(y_j|x_ix_i') \in Y_j$ ($i=4,5$ and $j = 2,3$).
\end{cl}
\begin{proof} (of Claim \ref{functionm-lem})

Choose $y_{5-j}\in Y_{5-j}$ (notice that $j$ and $5-j$ will be two different numbers in $\{2,3\}$), then 
\begin{align*}
m(x_i|y_jy_j') &= \frac{y_j'}{y_j}x_i = \frac{y_j'y_{5-j}}{y_jy_{5-j}}x_i \\
&= h(h(x_i|y_jy_{5-j})|y_j'y_{5-j})
\end{align*}

By the Claim \ref{functionh-lem}, $h(x_i|y_jy_{5-j})\in X_i$, then so is $h(h(x_i|y_jy_{5-j})|y_j'y_{5-j})$. Therefore, $m(x_i|y_jy_j')\in X_i$.

The second part of the claim follows from the same argument.

\end{proof}

\begin{cl}\label{group-lem}
$X_5$ is a subgroup of $\mathbb{R}^*$, and $Y_2,Y_3,X_4$ are cosets of $X_5$ in $\mathbb{R}^*$. Moreover, the fourth power of each coset representative of $Y_2,Y_3,X_4$ lies in $X_5$.
\end{cl}
\begin{proof} (of Claim \ref{group-lem})

First notice that the point $s_5=(1,0)$ is colored by 5, so we have that $1\in X_5$.

Let $y_j,y_j'\in Y_j$, then by Claim \ref{functionm-lem}, $y_j'/y_j = m(1|y_jy_j') \in X_5$. Hence, we have $Y_j \subset y_j X_5$

Let $y_j\in Y_j$, and $x_5 \in X_5$, then by Claim \ref{functionm-lem}, $x_5y_j = m(y_j|1x_5) \in Y_j$. So we have $y_j X_5 \subset Y_j$. Therefore, $Y_j = y_j X_5$.

Now let $x_5,x_5'\in X_5$, choose $y_j\in Y_j$. Since $Y_j = y_jX_5$,  $x_5y_j, x_5'y_j \in Y_j$, we have $x_5x_5'^{-1} = (x_5y_j)/(x_5'y_j) \in X_5$. So $X_5$ is a subgroup of $\mathbb{R}^*$. We will now denote this group by $G$ and notice that we have also shown that $Y_j = y_j G$.

Now let $x_4\in X_4$, $y_j\in Y_j$ and $g\in G$, then $gy_j \in Y_j$, so by Claim \ref{functionm-lem}, $gx_4 = m(x_4|y_j,gy_j) \in X_4$. So we have $x_4G\subset X_4$.

On the other hand, if $x_4,x_4'\in X_4$, $y_j \in Y_j$, then by Claim \ref{functionm-lem} again, $(x_4'/x_4)y_j = m(y_j|x_4x_4') \in Y_j$, so $x_4'/x_4\in G$. So we have $X_4\subset x_4 G$. Therefore, $X_4 = x_4G$.

To prove the moreover part, let $y_2\in Y_2,y_3\in Y_3$, then $y_2y_3 = h(1|y_2y_3) \in G$. Let $x_4 \in X_4$, then $y_2y_3/x_4 = h(x_4|y_2y_3) \in x_4G$. So we have $x_4^2 \in G$.

Similarly, let $x_4\in X_4, y_j \in Y_j$, $x_4/y_j = h(y_j|1,x_4) \in y_jG$, so we have $y_j^2 \in x_4G$, hence $y_j^4 \in G$. Therefore, the claim follows.
\end{proof}

The Claims \ref{functionh-lem} to \ref{group-lem} do  not suffice by themselves
to get a contradiction, see Proposition \ref{pr53}. We need a further argument. The following three claims show that the colorings of the lines passing through the origin are strictly restricted due to the multiplicative structure.

\begin{cl}\label{3-color-line-lem}
For every line $l$ through origin, $|\Gamma(l)| = 3$.
\end{cl}
\begin{proof} (of Claim \ref{3-color-line-lem})

By Claim  \ref{two-sided-lem}, we let $a_4,a_5$ be of color 4,5 on $[145]$ and on opposite side of $\vec 0$, and $b_2,b_3$ be of color 2,3 on $[123]$ and on opposite side of $\vec 0$. Given any line $l$ through $0$, the circle $(b_2b_3a_4)$ intersects $l$ at two points. The intersection points must be of color 2 or 3 or 4. Without loss of generality, we assume the intersection points are of color 2, then the circle $(b_3a_4a_5)$ intersects $l$ at two other points. The intersections are of color 3,4 or 5. Therefore, the claim follows (see Figure~\ref{fig5}).

\begin{figure}[h!]
\begin{center}
\includegraphics[width=70mm]{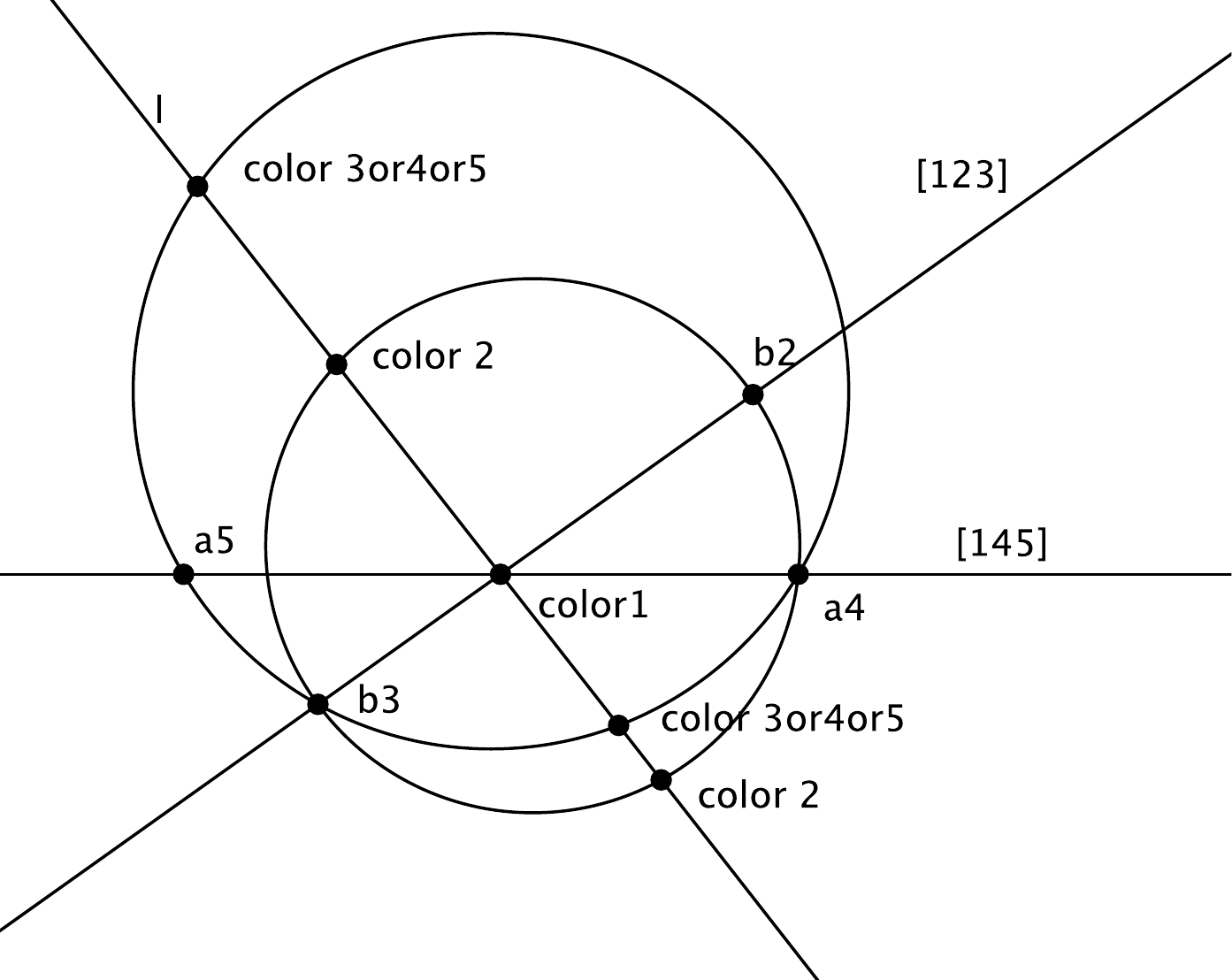}
\end{center}
\caption{}
\label{fig5}
\end{figure}
\end{proof}

\begin{cl}\label{distinct-color-lem}
Let $C$ be a circle intersecting $[123]$ and $[145]$ transversally. For $i = 4,5$, if $\Gamma(C\cap [145]) = \{i\}$ then $\Gamma(C\cap [123]) = \{1\}$ or $\Gamma(C\cap [123]) = \{2,3\}$.

Similarly, for $j = 2,3$ if $\Gamma(C\cap [123]) = \{j\}$, then $\Gamma(C\cap [145]) = \{1\}$ or $\Gamma(C\cap [145]) = \{4,5\}$.

\end{cl}

\begin{proof} (of Claim \ref{distinct-color-lem})

Let $\Gamma(C\cap [145]) = \{i\}$ and assume that $\Gamma(C\cap [123]) \neq \{1\}$, without loss of generality, assume that $2\in \Gamma(C\cap [123])$. Let $C\cap [145] = \{a_i, a_i'\}$ and $C\cap [123] = \{b_2, b\}$. Let $x_i,x_i', y_2, y$ be the signed norms associated to $a_i,a_i',b_2, b$, then $x_ix_i' = y_2y$. Notice that for either case $i=4$ or $i=5$, $x_ix_i' \in G$, so we have $y = g/y_2$ for some $g\in G$. Choose $y_3\in Y_3$, then $y_2y_3 = h(1|y_2y_3)\in G$, so we have $y = y_3 g'$ for some $g'\in G$. Therefore, $y\in Y_3$, which means $b$ is of color 3 (see Figure~\ref{fig6a}), so $\Gamma(C\cap [123]) = \{2,3\}$.

Let $\Gamma(C\cap [123]) = \{j\}$ and assume that $\Gamma(C\cap [145]) \neq \{1\}$, without loss of generality, assume that $4\in \Gamma(C\cap [145])$. Let $C\cap [123] = \{b_j, b_j'\}$ and $C\cap [145] = \{a_4, a\}$. Let $y_j,y_j', x_4, x$ be the signed norms associated to $b_j,b_j',a_4, a$, then $xx_4 = y_jy_j'$. As in the proof claim \ref{group-lem}, $y_jy_j' \in X_4$, so we have $x = y_jy_j'/x_4 \in G$. Therefore, $a$ is of color 5 (see Figure~\ref{fig6b}), so $\Gamma(C\cap [145]) = \{4,5\}$.

The other cases follow from exactly the same arguments. 
\begin{figure}[h!]
\begin{center}
\begin{subfigure}[h!]{0.4\textwidth}
\centering
\includegraphics[width=50mm]{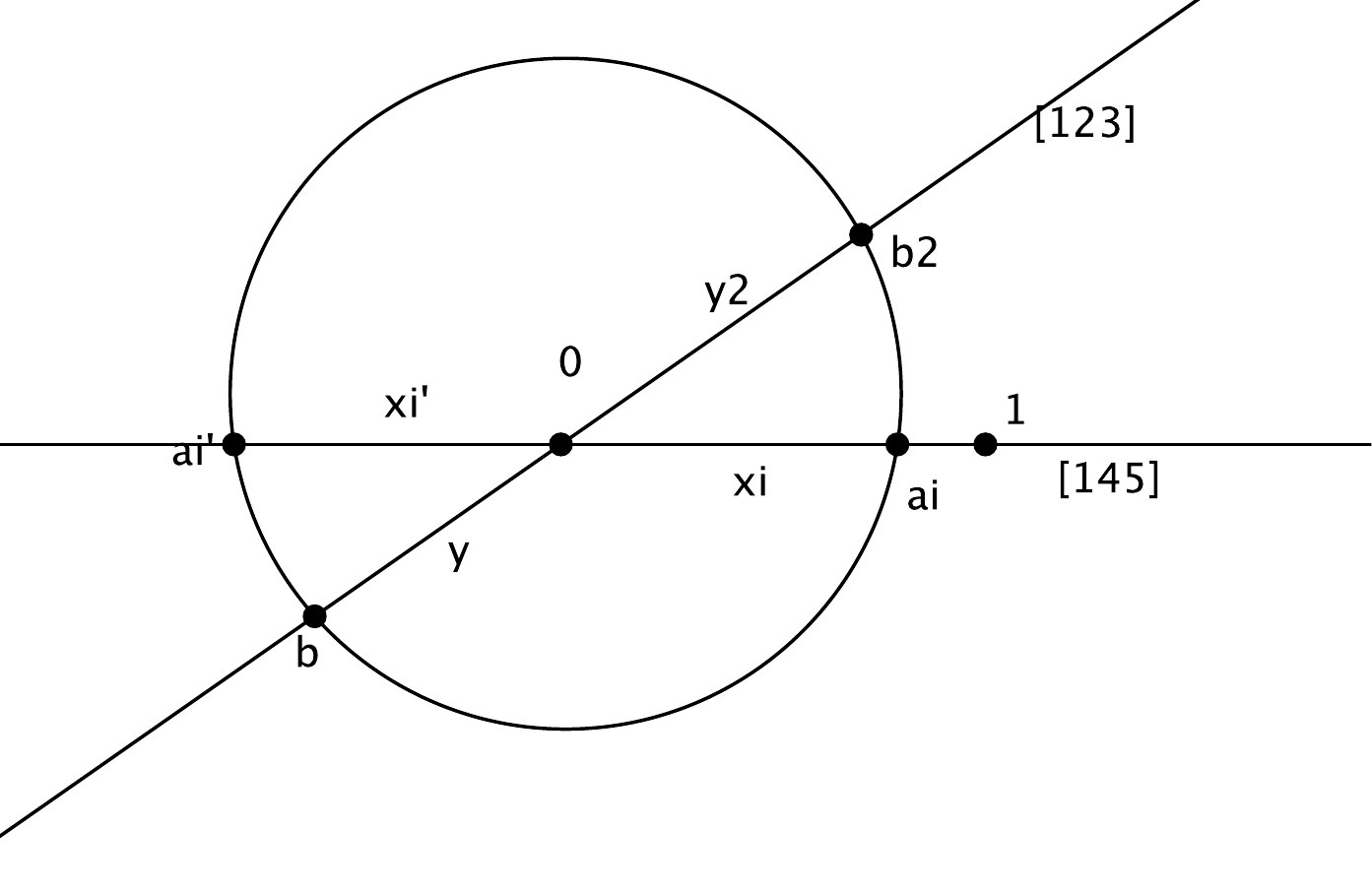}
\caption{}
\label{fig6a}
\end{subfigure}
\begin{subfigure}[h!]{0.4\textwidth}
\centering
\includegraphics[width=50mm]{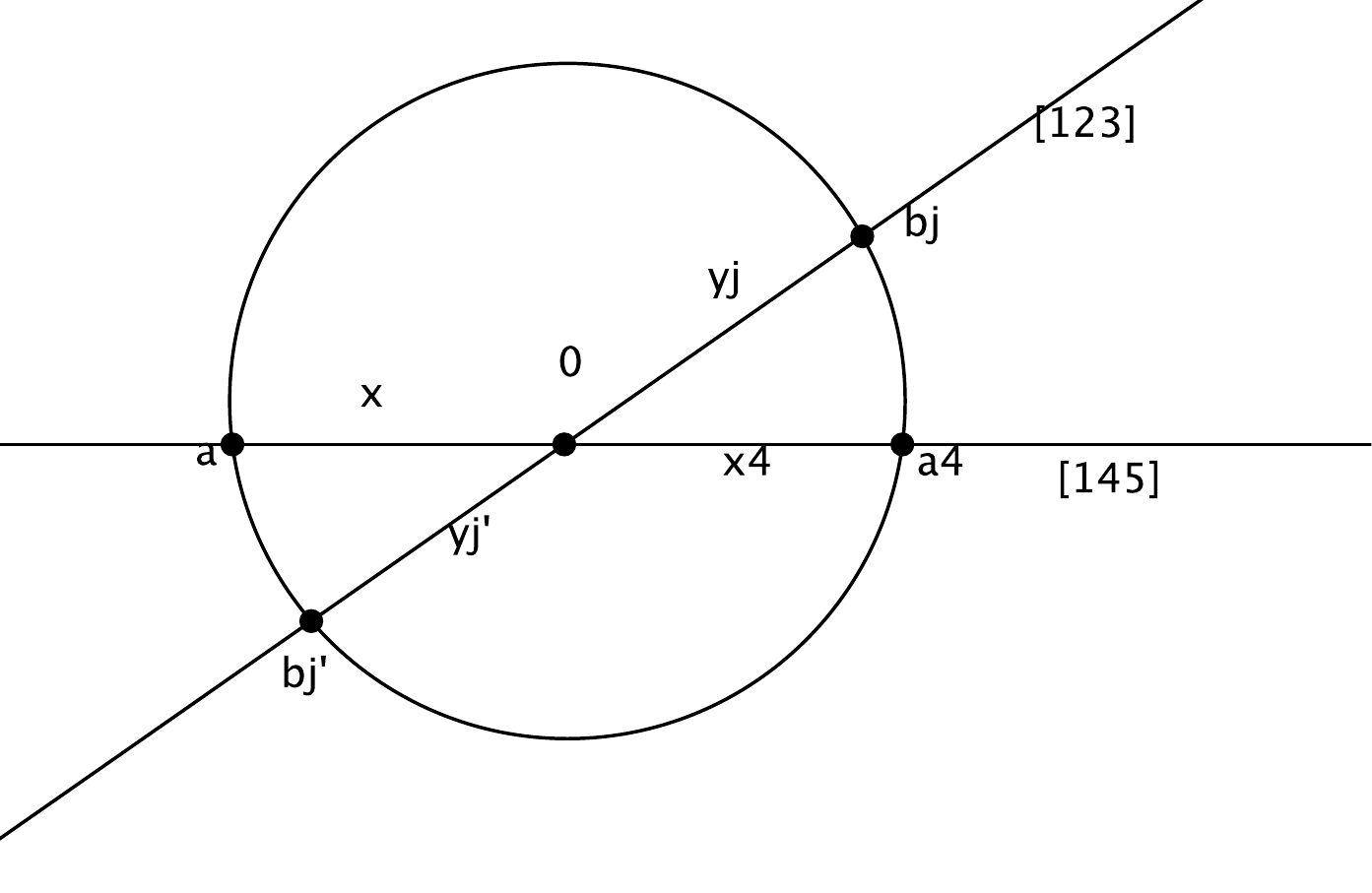}
\caption{}
\label{fig6b}
\end{subfigure}
\end{center}
\caption{}
\label{fig6}
\end{figure}
\end{proof}

\begin{cl}\label{intersect-lem}
For each line $l$ through origin, $\Gamma(l) = \{1,2,3\}$ or $\Gamma(l) = \{1,4,5\}$.
\end{cl}
\begin{proof} (of Claim \ref{intersect-lem})

We assume there is a line through $\vec 0$ of color 1,2,4. Denote this line by $[124]$. Choose points $a_4,a_5$ of color 4,5 on the opposite side of $\vec 0$ on $[145]$, and $b_3$ of color 3 on $[123]$. The circle $(b_3a_4a_5)$ intersects $[124]$ at two points of color 4. The two intersection points are on the opposite side of $\vec 0$, so we may assume that there is a pair of points of color 2 and 4 on the opposite side of $\vec 0$.
Let $c_2, c_4$ be of color 2,4 on the opposite side of $0$ on $[124]$, the circle $(234):=(c_2b_3c_4)$ intersects $[145]$ at two points $a_4,a_4'$ of color 4 (see Figure~\ref{fig7a}). 

Under a \m transformation $T$, we may map $a_4$ to $\vec 0$, $a_4'$ to $\infty$ and $\vec0$ to $(1,0)$. Under this transformation, the extended line$(145)$ is mapped to the extended X-axis, the circle $(234)$ is mapped to an extended line through origin, and the extended line $(124) = [124]\cup\{\infty\}$ is mapped to a Euclidean circle (see Figure~\ref{fig7b}). After the transformation, the Euclidean circle $(124)$ intersect $[145]=(145)-\{\infty\}$ and $[234]=(234)-\{\infty\}$ on four points. Then by exchanging the role of color $1$ and color 4, exact same argument shows that all previous claims holds. But the circle (124) intersects [145] and  [234] transversally and $\Gamma((124)\cap [145]) = \{1\}$, $\Gamma((124)\cap [234]) = \{2,4\}$, which is a contradiction to Claim  \ref{distinct-color-lem} (by exchanging the role of color 1 and 4 in the statement).

\begin{figure}[h!]
\begin{center}
\begin{subfigure}[h!]{0.4\textwidth}
\centering
\includegraphics[width=50mm]{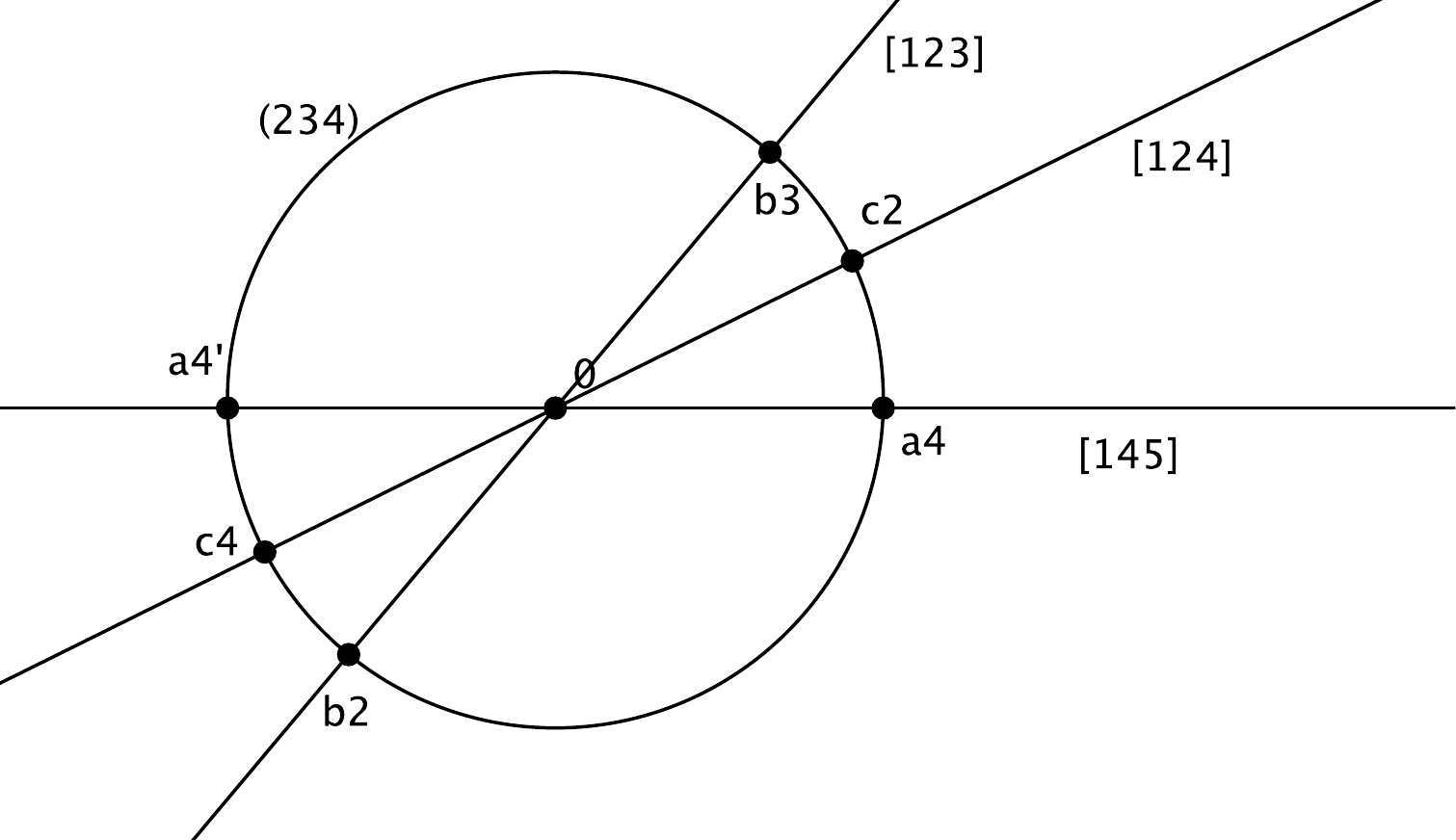}
\caption{Before Transformation}
\label{fig7a}
\end{subfigure}
\begin{subfigure}[h!]{0.4\textwidth}
\centering
\includegraphics[width=50mm]{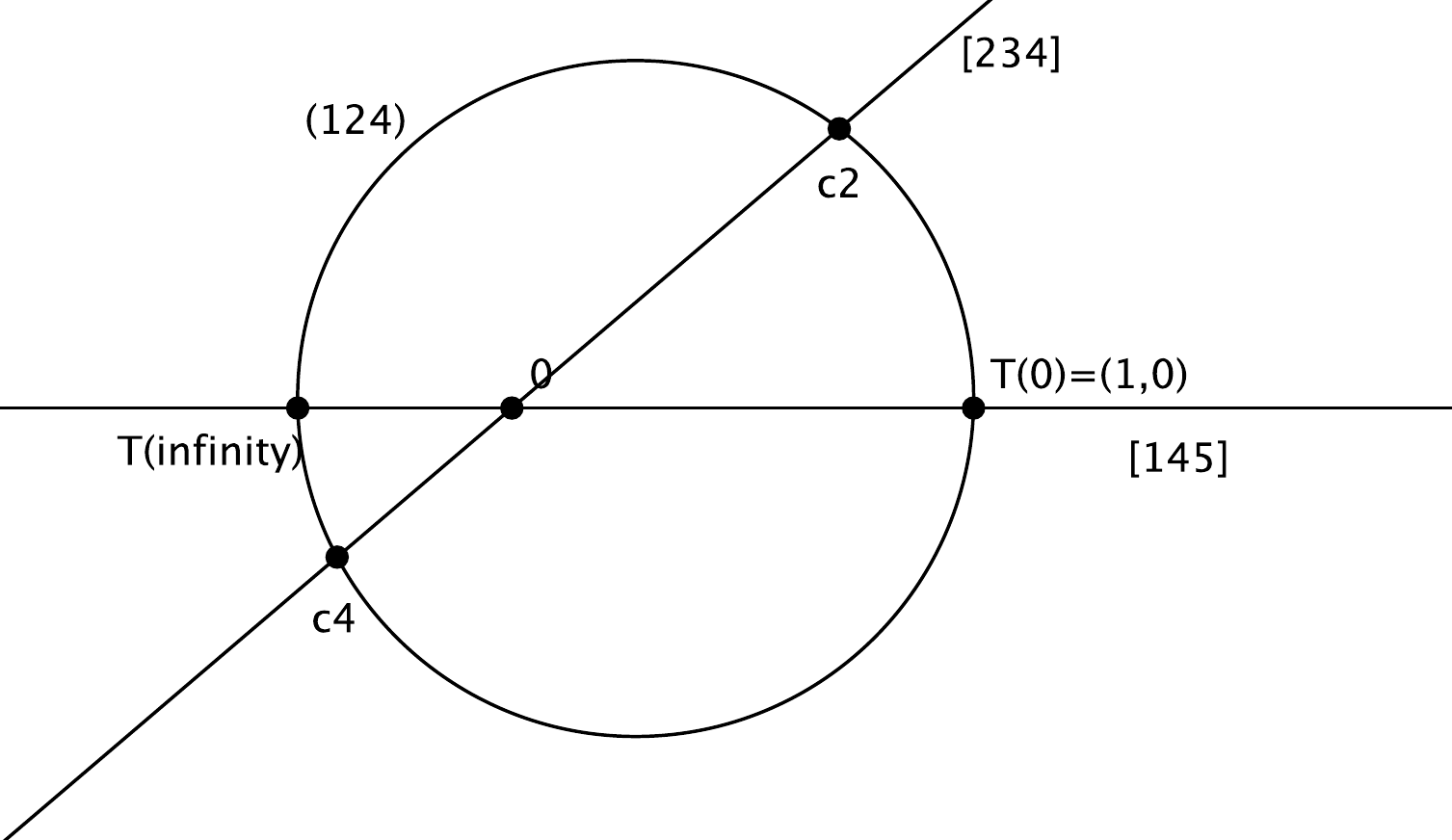}
\caption{After Transformation}
\label{fig7b}
\end{subfigure}
\end{center}
\caption{}
\label{fig7}
\end{figure}

Similarly, the line through $\vec 0$ can not be of colors 1,2,5 or 1,3,4 or 1,3,5. So together with Claim \ref{3-color-line-lem}, we conclude that each line through the origin has coloring either 1,2,3 or 1,4,5.
\end{proof}

(Return to the proof of Theorem \ref{th31})
Now let $[123]'$ be another line through 0 with color 1,2,3, then the exact argument shows that the signed norms of points of color 2,3 on $[123]'$ form cosets of group $G$ in $\mathbb{R}^*$ and the fourth power of the representatives are in $G$. If $[145]'$ be another line through 0 with color 1,4,5, choose $y_2\in Y_2$, then under the \m transformation $\phi(z)=(1/y_2)z$, the signed norms of points color 2 on $[123]$ then form the same group $G$. The exact same argument shows that the signed norms of points of color $4,5$ on $[145]'$ after the transformation form cosets of group $G$ in $\mathbb{R}^*$ and the fourth power of the representatives are in $G$. If we apply $\phi^{-1}$, any signed norm set is multiplied by a factor of $y_2$. The resulting signed norm set of points of color $4$ or $5$ on $[145]'$ is again a coset of $G$ and the fourth power of the representative is again in $G$.

Now let $H := \cup_{x^4\in G} xG$, then $H$ is a subgroup of $\R^*$. Given $z\in\R^2$ of color 2, 3, 4 or 5, by the result in the above paragraph, the signed norm $N(z) \in H$, then so is usual norm $\|z\|$ as $-1\in H$. Therefore, $\{\|z\|: z\in \R^2, z \text{ is of color 2, 3, 4 or 5}\}$ is a subset of $H$. Notice we can find a Euclidean circle which does not contain color 1 and whose center is not $\vec 0$, so $\{\|z\|: z\in \R^2, z \text{ is colored 2,3,4,5 }\}$  contains an non-degenerate interval. This implies $H$ contains a non-degenerate interval. Hence $\mathbb{R}^+ \subset H$ as $H$ is a subgroup of $\mathbb{R}^*$.

Now given $r\in \mathbb{R}^+$, then $r^{1/4} \in \mathbb{R}^+$. so $r^{1/4} \in H$. Hence, we write $r^{1/4} = xg$ with $x^4\in G$, then $r = (r^{1/4})^4 = (xg)^4 = x^4g^4 \in G$. Therefore, we have $\mathbb{R}^+ \subset G$. By Claim \ref{two-sided-lem}, $G$ contains at least one  negative value, hence $G = \mathbb{R}^*$. But this is a contradiction as this means there is no color 4 on the line $[145]$. This proves Theroem \ref{th31}.
\end{proof}

\begin{rmk}
If we identify $\R^2_\infty$ with the Riemann surface $\hat{\mathbb{C}}=\mathbb{P}^1( \mathbb{C} )$,
then  it is well known that four points $z_1,z_2,z_3,z_4$ lie on the same circle in $\hat{\mathbb{C}}$ if and only if their cross ratio 
$[z_1,z_2:z_3,z_4] = \frac{(z_1-z_3)(z_2-z_4)}{(z_2-z_3)(z_1-z_4)}$ is real. Using this result, one can
give an alternative derivation of  the  multiplicative structure of  the coloring sets  given in  Claim \ref{group-lem}.
\end{rmk}

\section{Sharpness of Main Result}\label{sec6}
 
The following two propositions give two examples  showing different ways in
which  the main result  Theorem \ref{th11b} is sharp.

The first result shows that the conclusion of Theorem \ref{th11b} fails if the hypothesis is weakened
to replace full $(n+3)$-coloring by full $(n+2)$-coloring.
\begin{prop}\label{pr12}
There is a full $(n+2)$-coloring of ${\mathbb S}^n$ in which all $(n-1)$-spheres are colored using at
most $(n+1)$ colors.
\end{prop}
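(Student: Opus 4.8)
The plan is to build the coloring explicitly in the model $\mathbb{S}^n \cong \R^n_\infty$ by successively setting aside two distinguished points — the point $\infty$ and the origin $\vec 0$ — and reducing the construction to a lower-dimensional coloring of a sphere of directions. Recall that in this model an $(n-1)$-sphere of $\mathbb{S}^n$ is either a Euclidean $(n-1)$-sphere of $\R^n$ or an extended hyperplane $H\cup\{\infty\}$ with $H$ an affine hyperplane of $\R^n$. First I would give $\infty$ its own color, say color $n+2$. This reduces the problem to producing a full $(n+1)$-coloring of $\R^n$ with colors $1,\dots,n+1$ such that every affine hyperplane of $\R^n$ omits at least one color. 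Indeed, a Euclidean $(n-1)$-sphere then carries colors only among $\{1,\dots,n+1\}$, hence at most $n+1$ of them; while an extended hyperplane $H\cup\{\infty\}$ carries color $n+2$ together with the colors on $H$, and the hyperplane property bounds the latter by $n$, for a total of at most $n+1$.

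Next I would set aside the origin, giving it color $n+1$, and color each remaining point $z\in\R^n\setminus\{\vec 0\}$ by its radial direction $z/\|z\|\in\mathbb{S}^{n-1}$, using an auxiliary $n$-coloring $\gamma$ of $\mathbb{S}^{n-1}$ to be chosen below. The verification splits into two cases. A hyperplane $H$ not through the origin never meets $\vec 0$, so it avoids color $n+1$ outright and uses at most $n$ colors. A hyperplane $H$ through the origin is an $(n-1)$-dimensional linear subspace, and the set of directions it realizes is exactly the great $(n-2)$-sphere $H\cap\mathbb{S}^{n-1}$; hence the colors appearing on $H\setminus\{\vec 0\}$ are precisely $\gamma(H\cap\mathbb{S}^{n-1})$. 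Thus it suffices to choose $\gamma$ so that every great $(n-2)$-sphere of $\mathbb{S}^{n-1}$ omits one of the $n$ colors, for then $H$ uses at most $(n-1)+1=n$ colors once the origin is counted.

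Finally, the required $\gamma$ is precisely a full $n$-coloring of $\mathbb{S}^{n-1}$ in which every great $(n-2)$-sphere omits a color — that is, exactly the sharp Euclidean example underlying Theorem \ref{great-sphere-thm} in dimension $n-1$. One can invoke that sharpness directly, or construct $\gamma$ by an induction on dimension parallel to the one above: give the two poles of $\mathbb{S}^{n-1}$ a common color and color the remaining points by their longitude in the equatorial $\mathbb{S}^{n-2}$, using a coloring of $\mathbb{S}^{n-2}$ with the analogous great-sphere property. A great $(n-2)$-sphere either misses the poles (hence the pole color) or contains them, in which case its longitudes sweep out a great $(n-3)$-sphere that omits a color by induction. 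The base case $\mathbb{S}^1$ is handled by any $2$-coloring that is constant on antipodal pairs.

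I expect the main obstacle to be the bookkeeping in the through-the-distinguished-point cases: verifying that a linear hyperplane sees only the colors lying on the corresponding great sub-sphere, and keeping the dimension and color counts exactly aligned so that each reduction loses precisely one available color while remaining surjective (every color class nonempty). The cases where the cutting sphere or hyperplane avoids the distinguished point are immediate, so essentially all the content concentrates in these degenerate incidences.
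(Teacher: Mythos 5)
Your proof is correct, but it is organized quite differently from the paper's. The paper writes the coloring down in one shot: with $\R^i := \{x\in\R^n : x_{i+1}=\cdots=x_n=0\}$, it sets $\Gamma_1=\{\vec 0\}$, $\Gamma_2=\{\infty\}$, and $\Gamma_i=\R^{i-2}\setminus\R^{i-3}$ for $i\geq 3$, and then verifies the bound directly: a sphere carrying all $n+2$ colors must contain $\vec 0$ and $\infty$ (colors $1$ and $2$), hence is an extended hyperplane through the origin, and an induction up the flag (it meets $\R^k\setminus\R^{k-1}$, so it contains $\R^k$) forces it to contain all of $\R^n$, a contradiction. You instead peel off $\infty$, then $\vec 0$, and radially project to reduce the problem to the statement that $\mathbb{S}^{n-1}$ admits a full $n$-coloring in which every great $(n-2)$-sphere omits a color --- which is exactly the sharpness example for the Euclidean analogue (Theorem \ref{great-sphere-thm}) that the paper proves separately in Section \ref{euclid}, by the same flag construction. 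Invoking that proposition is not circular, since its proof is independent of Proposition \ref{pr12}, and in any case your pole/longitude induction reproves it self-containedly. It is worth noting that if you unwind your recursion, the coloring you build coincides with the paper's up to relabeling: a point of $\R^n\setminus\{\vec 0\}$ is colored by the index of its first (in the paper: last) nonzero coordinate, with $\vec 0$ and $\infty$ as singleton classes. So the two arguments produce the same partition; what yours buys is modularity and an explicit structural link between the two sharpness results --- the inversive example in dimension $n$ is the Euclidean example in dimension $n-1$ plus two distinguished points --- while the paper's version is shorter and verified in a single induction.
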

\begin{proof}
Again, we identify $\mathbb{S}^n$ with $\R^n_\infty$.
Denote $\R^i :=\{(x_1,...,x_n)\in\R^n:x_{i+1}=x_{i+2}=...=x_n = 0\}$ Consider the following partition,
\begin{align*} 
\Gamma_1 &= \{\vec 0\},\\
\Gamma_2 &= \{\infty\}, \\
\Gamma_i &= \R^{i-2} - \R^{i-3} \text{ for }i\geq 3
\end{align*}

It is clear that $\cup_{i=1}^{n+2}\Gamma_i = \mathbb{R}^n\cup\{\infty\}$, and $\Gamma_i\cap\Gamma_j = \emptyset$ if $i\neq j$. So $\Gamma=\{\Gamma_i:i=1,..,n+2\}$ is indeed a full $(n+2)$-coloring of $\mathbb{S}^n$. 

Suppose for contradiction that there is an $(n+2)$-colored $(n-1)$-sphere $S$. Since $S$ has color 1 and 2, $\vec 0\in S$ and $\infty \in S$, so $S$ is an extended $(n-1)$-plane through the origin.

We will prove by induction that $\R^k$ for all $k$. For the base case, since $S$ contains color $3$, $S\cap (\R^1-\R^0) \neq \emptyset$, so $\R^1 \subset S$. For the induction step, assume that $\R^{k-1} \subset S$. Since $S$ has color $k+2$, $S\cap (\R^k-\R^{k-1} \neq \emptyset$, hence $\R^k \subset S$, completing the induction step. 

Therefore, by induction, we have $\R^n \subset S$, which is a contradiction.
\end{proof}

The second result shows that no matter how many color classes we impose on
the coloring, the conclusion of Theorem \ref{th11b} cannot be strengthened to guarantee
the existence of  an
$(n-1)$-sphere requiring $(n+3)$ or more colors.

\begin{prop}\label{pr52}
For each $k\geq n+3$, there is a full  $k$-coloring of $\mathbb{S}^n$ in which all  $(n-1)$-spheres
are colored with  $(n+2)$ or fewer colors.
\end{prop}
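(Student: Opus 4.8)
The plan is to realize the $k-(n+1)$ extra colors as isolated single points placed in sufficiently general position, absorbing everything else into one background color. Concretely, set $M := k-1$, so that $M \ge n+2$ since $k \ge n+3$. I would choose $M$ points $p_1,\dots,p_M \in \R^n$ and define the coloring by $\Gamma_i = \{p_i\}$ for $1 \le i \le M$ together with the background class $\Gamma_k = \R^n_\infty \setminus \{p_1,\dots,p_M\}$. This is manifestly a full $k$-coloring: each singleton is nonempty, and the cofinite background $\Gamma_k$ is nonempty (it contains $\infty$ and all but finitely many points). So the combinatorial bookkeeping of the number of classes is automatic, and the entire weight of the proof is thrown onto a single geometric property of the configuration $\{p_i\}$.

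That property is that \emph{no $n+2$ of the points lie on a common $(n-1)$-sphere}, where, as throughout the paper, an $(n-1)$-sphere means either a Euclidean $(n-1)$-sphere or an extended hyperplane. Granting this, any $(n-1)$-sphere $S$ passes through at most $n+1$ of the special points, hence meets at most $n+1$ of the singleton colors; adding the single background color gives $|\Gamma(S)| \le n+2$, exactly as required. I would emphasize that this bound is uniform in $k$, which is precisely what makes the family of examples show that the conclusion of Theorem \ref{th11b} can never be strengthened from $n+2$ to $n+3$, no matter how large $k$ is.

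The one substantive step is therefore the existence of such a configuration, which I would obtain by an inductive point-placement argument. Suppose $p_1,\dots,p_j$ have already been chosen so that no $n+2$ of them are cospherical and every $n+1$ of them determine a unique $(n-1)$-sphere. Each $(n+1)$-element subset determines one such sphere, yielding a finite collection of at most $\binom{j}{n+1}$ spheres, and together with the finitely many lower-dimensional flats and spheres spanned by smaller subsets these form a finite family of proper subvarieties of $\R^n$, each of dimension at most $n-1$. A new point $p_{j+1}$ violates the general-position requirement only if it falls on one of these sets; since each has dimension strictly less than $n$, their union cannot cover $\R^n$, so I simply choose $p_{j+1}$ outside it. Running this induction up to $j = M$ produces the required points.

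I expect the general-position step to be the main obstacle, and within it the delicate point is the treatment of degenerate $(n-1)$-spheres: the loci to be avoided must be taken in the inversive sense, including extended hyperplanes and the point at infinity, so that the estimate ``at most $n+1$ special points per $(n-1)$-sphere'' is valid for hyperplanes as well as for honest Euclidean spheres. Phrasing the cospherical condition as the vanishing of the standard $(n+2)\times(n+2)$ determinant in the coordinates and the squared norm $\|x\|^2$ handles both cases at once, after which the dimension-counting avoidance argument applies without change.
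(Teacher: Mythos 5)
Your proposal is correct and takes essentially the same approach as the paper: place $k-1$ points in ``spherical general position'' (no $n+2$ of them on a common $(n-1)$-sphere), color each as a singleton class, and put everything else into one background class, so that any $(n-1)$-sphere meets at most $n+1$ singletons plus the background for at most $n+2$ colors. You actually give more detail than the paper does, since the paper merely asserts that the general-position configuration ``is possible to construct by induction,'' whereas you spell out the dimension-counting avoidance argument.
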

\begin{proof}
Let $M=\{x_1,...,x_{k-1}\}\subset \mathbb{S}^n$ of $k-1$ points with no $(n+2)$ points in $M$ are in the same $\mathbb{S}^{n-1}$. This is possible to construct by induction. Now if we let $\Gamma_i = \{x_i\}$ and $\Gamma_k = \mathbb{S}^n-M$, then there is no $(n+3)$-colored $(n-1)$-sphere in this coloring of $\mathbb{S}^n$.
\end{proof}

The following example gives some indication that the proof of the main theorem for dimension 2 cannot be substantially simplified.


\begin{prop}\label{pr53}
Given $C_1$, $C_2$ circles in $\mathbb{S}^2$ with $C_1$ intersecting $C_2$ transversally. There is a 5-coloring $\Gamma$ of $C_1\cup C_2$ such that for every circle $C$ in $\mathbb{S}^2$, $|\Gamma(C\cap (C_1\cup C_2))|\leq 3$.
\end{prop}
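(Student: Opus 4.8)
The plan is to normalize the configuration by a \m transformation and then classify which circles can possibly see four colors. Since $C_1$ and $C_2$ meet transversally they meet in exactly two points; applying a \m transformation, which carries circles to circles and preserves the number of colors on each circle, I would send one intersection point to $\infty$ and the other to the origin $O$, so that $C_1,C_2$ become two distinct extended lines $L_1,L_2$ through $O$. Identifying the interior points (those $\neq O,\infty$) of each $L_i$ with their signed norms in $\R^*$, recall from the remark preceding Claim \ref{functionh-lem} that two points of $L_1$ with signed norms $u,u'$ and two points of $L_2$ with signed norms $v,v'$ are concyclic precisely when $uu'=vv'$. Now any extended line other than $L_1,L_2$ meets $L_1\cup L_2$ in $\infty$ together with at most one further point of each line, so it sees at most three colors; likewise a Euclidean circle through $O$ meets $L_1\cup L_2$ in $O$ plus at most one further point of each line. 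Hence the only circles that can threaten the bound are Euclidean circles avoiding both $O$ and $\infty$ and meeting each $L_i$ in two interior points.

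For these I would manufacture a coloring from a single multiplicative character. The crucial point is that one needs a homomorphism of $\R^*$ onto a group carrying an element of order $4$: since $\R^+$ is divisible it has no proper finite-index subgroup, so there is no surjection $\R^*\to\mathbb{Z}/4$, and one is forced to pass through $\mathbb{Q}/\mathbb{Z}$. Using a Hamel basis, fix a surjective homomorphism $\alpha:\R^*\to\mathbb{Q}/\mathbb{Z}$ (for instance $\alpha(t)=\psi(|t|)$ for a surjection $\psi:\R^+\to\mathbb{Q}/\mathbb{Z}$). I then color an interior point of $L_1$ with signed norm $u$ by color $4$ if $\alpha(u)=0$, by color $5$ if $\alpha(u)=\tfrac12$, and by color $1$ otherwise; I color a point of $L_2$ with signed norm $v$ by color $2$ if $\alpha(v)=\tfrac14$, by color $3$ if $\alpha(v)=\tfrac34$, and by color $1$ otherwise; finally $O$ and $\infty$ receive color $1$. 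Every value $0,\tfrac14,\tfrac12,\tfrac34$ is attained, so all five colors occur and this is a genuine full $5$-coloring.

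It then remains to check that no Euclidean circle missing $O,\infty$ and meeting each $L_i$ twice sees four colors. Since the colors on $L_1$ lie in $\{1,4,5\}$ and those on $L_2$ in $\{1,2,3\}$, four distinct colors can occur only if at most one of the four points is colored $1$, giving exactly five color-patterns: $\{4,5\}$ against $\{2,3\}$; the two patterns $\{1,4\}$ and $\{1,5\}$ against $\{2,3\}$; and the two patterns $\{4,5\}$ against $\{1,2\}$ and $\{1,3\}$. Applying $\alpha$ to the relation $uu'=vv'$ turns concyclicity into $\alpha(u)+\alpha(u')=\alpha(v)+\alpha(v')$ in $\mathbb{Q}/\mathbb{Z}$, so each pattern forces an identity between an $\alpha$-sum on $L_1$ and one on $L_2$. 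Because $H:=\{0,\tfrac12\}$ is a subgroup and $\{\tfrac14,\tfrac34\}$ is its nontrivial coset, the $L_1$-sum of colors $4,5$ is $\tfrac12\in H$ while the $L_2$-sum of $2,3$ is $0\in H$, so the first pattern needs $\tfrac12=0$, which fails; and since $H$ is closed under negation, a short computation rules out the remaining four (for example $\{1,4\}$ against $\{2,3\}$ would require the $\alpha$-value of the color-$1$ point to equal $0$, contradicting that it avoids $\{0,\tfrac12\}$). I expect the main obstacle to be precisely this last verification that the background color $1$ cannot combine with the special colors to slip in a fourth color; this is the step that makes the argument genuinely finer than Claims \ref{functionh-lem}--\ref{group-lem}, and it is exactly why those claims alone do not force the contradiction needed for Theorem \ref{th31}.
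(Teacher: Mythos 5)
Your proposal is correct and is essentially the paper's own argument: the paper likewise normalizes $C_1,C_2$ to two extended lines through the origin, observes that only Euclidean circles missing $\vec 0$ and $\infty$ and cutting each line twice can threaten the bound, uses the power-of-a-point relation $\|a\|\|b\|=\|c\|\|d\|$, and colors one line by $\mathbb{Q}^*$ and $2^{1/2}\mathbb{Q}^*$ (colors $5,4$) and the other by $2^{1/4}\mathbb{Q}^*$ and $2^{-1/4}\mathbb{Q}^*$ (colors $2,3$) with everything else colored $1$ --- which is exactly your pattern $\{0,\tfrac12\}$ versus $\{\tfrac14,\tfrac34\}$, realized through the quotient homomorphism $\R^+\to\R^+/\mathbb{Q}^+$ with $2^{1/4}\mathbb{Q}^+$ as the element of order $4$, followed by the same case analysis. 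The only real difference is that this explicit realization shows your Hamel-basis surjection onto $\mathbb{Q}/\mathbb{Z}$ is an unnecessary use of choice, and the side remark that one is ``forced'' to pass through $\mathbb{Q}/\mathbb{Z}$ is an overstatement: any homomorphic image of $\R^*$ containing an element of order $4$ works, for instance also the explicit $t\mapsto \log|t| \bmod \mathbb{Z}$ into $\R/\mathbb{Z}$.
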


\begin{rmk}
In the proof of Theorem \ref{th31} in dimension 2, we defined the notion of signed norm and shown that the signed norm sets on two intersecting lines for color 2,3,4,5 are actually cosets of a subgroup in $\R^*$. Proposition \ref{pr53}
 shows that the constraints these impose of a configuration of two intersecting lines are not enough to lead a contradiction.
\end{rmk}
\begin{proof}
Under a \m transformation, we may map one of the intersection points to $\infty$ and the other to $(0,0)$. Then $C_1$ and $C_2$ are mapped to two extended lines through origin. We define a 5-coloring as follows:

\begin{align*}
\Gamma_2&:=\{\vec x\in C_2: \|\vec x\| \in 2^{1/4}\mathbb{Q}^*\}\\
\Gamma_3&:=\{\vec x\in C_2: \|\vec x\| \in 2^{-1/4}\mathbb{Q}^*\}\\
\Gamma_4&:=\{\vec x\in C_1: \|\vec x\| \in 2^{1/2}\mathbb{Q}^*\}\\
\Gamma_5&:=\{\vec x\in C_1: \|\vec x\| \in \mathbb{Q}^*\}\\
\Gamma_1&:=C_1\cup C_2-\cup_{i=2}^5\Gamma_i
\end{align*}

Suppose for contradiction that there is a circle $C$ such that $C\cap (C_1\cup C_2)$ contains 4 or more colors. Notice if $C$ is an extended line, then $|C\cap (C_1\cup C_2)|\leq 3$. Therefore, $C$ is a Euclidean circle and intersects each $C_1$ and $C_2$ at 2 points. Denote $a,b$ be intersection points on $C_1$ and $c,d$ be intersection points on $C_2$. By assumption, $a,b,c,d$ are of different colors, notice three of them must be of color 2,3,4 or 5. Hence, there are 4 cases to consider, we will prove one case and the other cases can be treated similarly.

Suppose this circle $C$ contains color 2,4 and 5. We let $a, b, c$ are of color 4, 5 and 2 respectively. By elementary geometry, we have $\|a\|\|b\| = \|c\|\|d\|$. A straight forward calculation shows that $\|d\|\in 2^{1/4}\mathbb{Q}^*$, so $d$ has color 2, which is a contradiction.

Similar arguments and calculations show that the other cases are not possible as well. Therefore, the proposition follows.
\end{proof}
\section{Analogue of Main Theorem in Euclidean Geometry}\label{euclid}
In section \ref{main-thm-euclid}, we introduced the Euclidean analogue of the main result where $(n-1)$-spheres are replaced by great $(n-1)$-spheres. Similar to the main theorem, to get a great $(n-1)$-sphere of $n$ different colors is immediate as we can pick $n$ points on $\mathbb{S}^n$ of distinct colors and form a $n$ dimensional vector space that contains these points, then the intersection of the vector space and $\mathbb{S}^n$ will be a great $(n-1)$-sphere. The content of the theorem lies wholly in replacing $n$ by $n+1$. However, in contrast to Theorem \ref{th11b}, the proof in this case is quite simple. The key observation is the following.\\

{\bf Observation:} Any great $(n-1)$-sphere and great circle (i.e., great $1$-sphere) on $\mathbb{S}^n$ must intersect.
\begin{proof}
Given a great $(n-1)$-sphere $S$ and a great circle $C$ on $\mathbb{S}^n$, let $P$ be the hyperplane through the origin such that $S = P\cap \mathbb{S}^n$ and $Q$ be a two plane through the origin such that $C = Q\cap \mathbb{S}^n$. Notice $P$ is a vector space in $\R^{n+1}$ of dimension $n$ and $Q$ is a vector space in $\R^{n+1}$ of dimension $2$, so $P\cap Q$ is a vector space in $\R^{n+1}$ of dimension at least $1$. Therefore, $S\cap C = P\cap Q\cap \mathbb{S}^n$ contains at least two points.
\end{proof}

\begin{proof}[Proof of Theorem \ref {great-sphere-thm}.]
It  is enough to prove the theorem for $k= n+2$.

We will argue by contradiction. Suppose that there is no great $(n-1)$-sphere that contains $(n+1)$ or more different colors, then there is no great circle that contains at least $3$ different colors either (as we can always build a great $(n-1)$-sphere of $(n+1)$ colors from a great circle of $3$ colors). Now we let $S$ be a great $(n-1)$-sphere of color $1, 2,..., n$ and $C$ be a great circle of color $n+1$ and $n+2$. Notice that by assumption, $S$ only has points of color $1, 2,..., n$ and $C$ only has points of color $n+1$ and $n+2$. However, by the observation, $S\cap C \neq \emptyset$. This is a contradiction.
\end{proof}

Similar to the main result, we have the following two propositions showing different ways in which this result is sharp.

The first result shows that the conclusion of Theorem \ref {great-sphere-thm} fails if the hypothesis is weakened to replace full $(n+2)$ coloring by full $(n+1)$-coloring.
\begin{prop}
There is a full $(n+1)$-coloring of $\mathbb{S}^n$ in which all great $(n-1)$-spheres are colored using at most $n$ colors.
\end{prop}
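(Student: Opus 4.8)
The plan is to mirror the flag construction of Proposition \ref{pr12}, but adapted to the great-sphere setting. Since great $(n-1)$-spheres correspond exactly to hyperplanes \emph{through the origin} in $\R^{n+1}$, i.e.\ to $n$-dimensional linear subspaces, I would replace the affine flag used there by a complete \emph{linear} flag. So I would fix a chain of linear subspaces
$$\{\vec 0\} = V_0 \subset V_1 \subset \cdots \subset V_{n+1} = \R^{n+1}, \qquad \dim V_k = k,$$
and color a point $x \in \mathbb{S}^n$ by the unique index $k \in \{1,\dots,n+1\}$ with $x \in V_k \setminus V_{k-1}$. This is well defined because $x \neq \vec 0$ forces $k \geq 1$ while $x \in V_{n+1}$ forces $k \leq n+1$, and it is a \emph{full} $(n+1)$-coloring because each $V_k \setminus V_{k-1}$ is a nonempty cone and hence meets $\mathbb{S}^n$.

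Next I would record the translation of the coloring into linear algebra. For a great $(n-1)$-sphere $S = P \cap \mathbb{S}^n$, with $P$ an $n$-dimensional linear subspace, I claim that $S$ contains a point of color $k$ if and only if $\dim(P \cap V_k) > \dim(P \cap V_{k-1})$. Indeed, a color-$k$ point of $S$ is a unit vector lying in $P \cap V_k$ but not in $V_{k-1}$; since such a vector already lies in $P$, failing to lie in $V_{k-1}$ is the same as failing to lie in $P \cap V_{k-1}$, which is exactly the assertion that the intersection dimension strictly increases at step $k$. Conversely, any such strict increase produces a nonzero vector of $P \cap (V_k \setminus V_{k-1})$, which normalizes to a color-$k$ point of $S$.

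The conclusion is then a pigeonhole on dimensions. If some great $(n-1)$-sphere $S = P \cap \mathbb{S}^n$ carried all $n+1$ colors, then by the previous step the chain $\dim(P \cap V_0), \dim(P \cap V_1), \dots, \dim(P \cap V_{n+1})$ would strictly increase at every one of its $n+1$ steps; starting from $\dim(P \cap V_0) = 0$ this would force $\dim(P \cap V_{n+1}) \geq n+1$. But $P \cap V_{n+1} = P$ has dimension $n$, a contradiction, so every great $(n-1)$-sphere omits at least one color.

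The argument is short and presents no real obstacle once the objects are in place; I expect the routine verifications (nonemptiness of each color class, the intersection-dimension equivalence) to be entirely mechanical. The one genuinely substantive point—and the analogue of the crux of Proposition \ref{pr12}—is recognizing that the great-sphere hypothesis is precisely a statement about $n$-dimensional \emph{linear} subspaces, so that intersecting with a complete linear flag caps the number of attainable colors at $\dim P = n$.
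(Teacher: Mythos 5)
Your proposal is correct and is essentially the paper's own argument: the paper uses exactly this flag coloring (with the standard coordinate flag $\R^1\subset\R^2\subset\cdots\subset\R^{n+1}$), and its inductive step ($S$ has color $k$ and $\R^{k-1}\subset P$ forces $\R^k\subset P$, culminating in $\R^{n+1}\subset P$) is the same argument as your dimension-count $\dim(P\cap V_k)>\dim(P\cap V_{k-1})$ at every step, just phrased as containment rather than as a pigeonhole on dimensions.
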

\begin{proof}
The construction is very similar to the construction in Proposition \ref{pr12}.
We consider $\mathbb{S}^n$ as a subset of $\R^{n+1}$. 
Denote $\R^i :=\{(x_1,...,x_{n+1})\in\R^{n+1}:x_{i+1}=x_{i+2}=...=x_{n+1} = 0\}$. Consider the following partition:
\begin{equation*}
\Gamma_i = (\R^i-\R^{i-1})\cap \mathbb{S}^n,  ~~~ 1 \le i \le n+1.
\end{equation*}
It is clear that $\Gamma := \{\Gamma_i:i=1,...,n+1\}$ is indeed a full $(n+1)$-coloring of $\mathbb{S}^n$.
\begin{figure}[h!]
\begin{center}
\includegraphics[width=70mm]{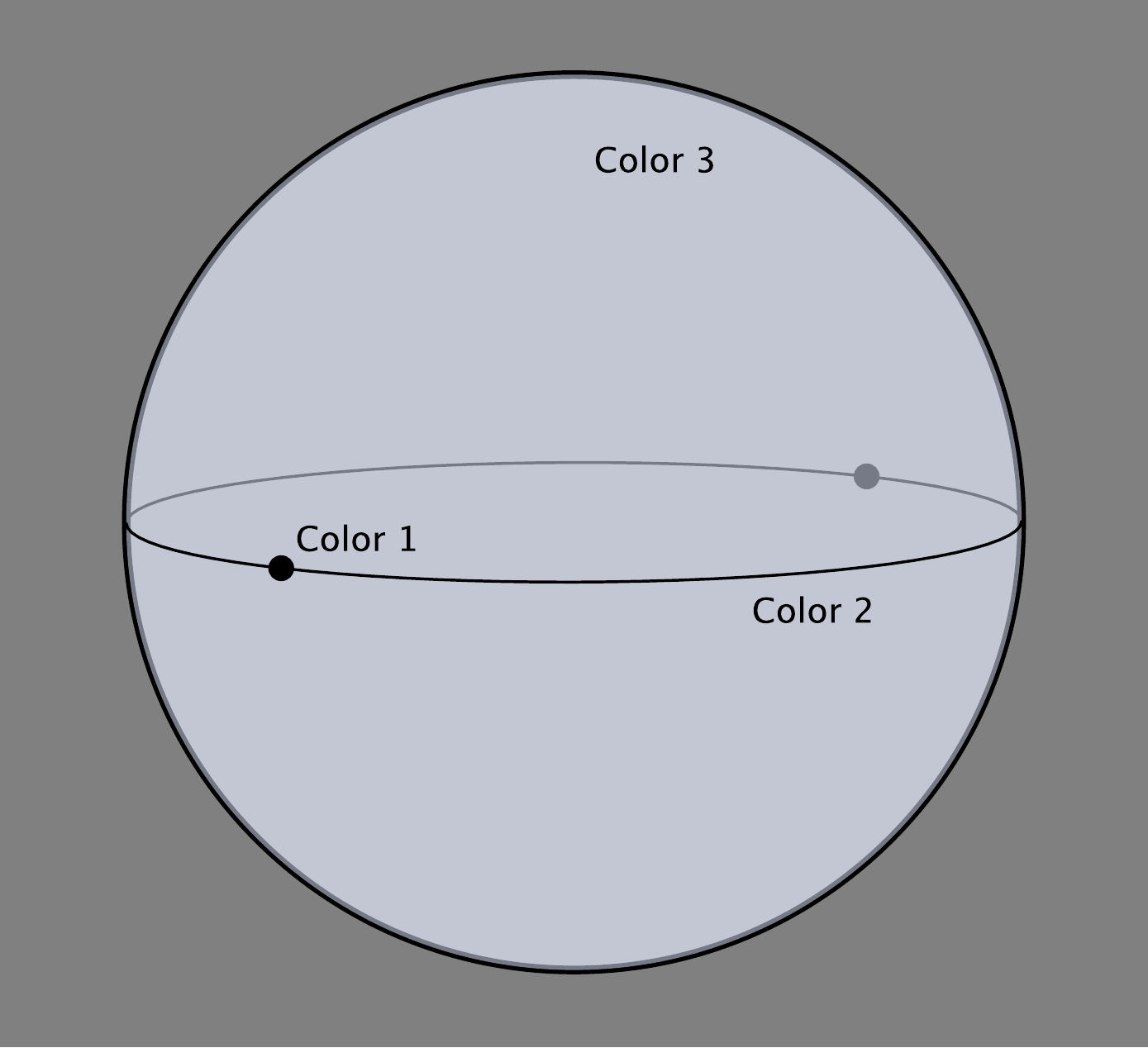}
\end{center}
\caption{Coloring of $\mathbb{S}^2$}
\label{fig8}
\end{figure}

We argue by contradiction to show that there is no great $(n-1)$-sphere containing $(n+1)$ colors. Suppose that there
were, and let $S$ be   a great $(n-1)$ sphere $S$ that contains all the colors.
Then let $P$ be a hyperplane in $\R^{n+1}$ such that $S = P\cap \mathbb{S}^n$. 

We will prove by induction on $k \ge 1$  that $\R^k \subset P$ for all $k$.
For the base case, since $S$ has color $1$, we have $\R^1 \subset P$.
For the induction step, assume that $\R^{k-1} \subset P$. Since $S$ has color $k$, 
$P\cap (\R^k-\R^{k-1}) \neq \emptyset$, hence $\R^k \subset P$, completing the induction step.
Therefore, by induction, we have $\R^{n+1} \subset P$ which is a contradiction.
\end{proof}

Figure~\ref{fig8} shows the partition in $\mathbb{S}^2$. The two antipodal points are colored by color 1, the great circle passing through these two points is colored by color 2 (except at the two points) and the rest of the sphere is colored by color 3.

The second result shows that no matter how many color classes we impose on the coloring, the conclusion of Theorem  \ref {great-sphere-thm} cannot be strengthened to guarantee the existence of a great $(n-1)$-sphere of $(n+2)$ or more colors.

\begin{prop}
For each $k\geq n+2$, there is a full $k$-coloring of $\mathbb{S}^n$ in which all great $(n-1)$-spheres are colored with $(n+1)$ or fewer colors.
\end{prop}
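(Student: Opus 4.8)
The plan is to mimic the construction used in Proposition~\ref{pr52}, but with the general position condition adapted from arbitrary $(n-1)$-spheres to \emph{great} $(n-1)$-spheres. The crucial observation is that a great $(n-1)$-sphere is the intersection $P\cap\mathbb{S}^n$ of $\mathbb{S}^n$ with a hyperplane $P$ through the origin, i.e.\ with an $n$-dimensional linear subspace of $\R^{n+1}$. Consequently, a collection of points $x_{i_1},\dots,x_{i_m}\in\mathbb{S}^n$ lies on a common great $(n-1)$-sphere if and only if the corresponding vectors span a subspace of dimension at most $n$; in particular, any $n+1$ points that are linearly independent in $\R^{n+1}$ cannot lie on a single great $(n-1)$-sphere.

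First I would choose a set $M=\{x_1,\dots,x_{k-1}\}\subset\mathbb{S}^n$ of $k-1$ points (note $k-1\geq n+1$ since $k\geq n+2$) in \emph{linear} general position, meaning that every $n+1$ of them are linearly independent as vectors in $\R^{n+1}$. Such a configuration exists: one may take the normalizations $x_i=v(t_i)/\|v(t_i)\|$ of points $v(t_i)=(1,t_i,t_i^2,\dots,t_i^n)$ on the moment curve for distinct parameters $t_1,\dots,t_{k-1}$, since the Vandermonde determinant guarantees that any $n+1$ of the $v(t_i)$ are linearly independent, and positive rescaling of each vector preserves this. Alternatively, the points may be chosen one at a time by induction, at each stage avoiding the finitely many proper linear subspaces spanned by $n$ of the previously chosen points.

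Next I would define the coloring by $\Gamma_i=\{x_i\}$ for $1\leq i\leq k-1$ and $\Gamma_k=\mathbb{S}^n-M$. Each class is nonempty ($\Gamma_k$ because $M$ is finite while $\mathbb{S}^n$ is uncountable) and they partition $\mathbb{S}^n$, so this is a full $k$-coloring. To verify the coloring property, let $S=P\cap\mathbb{S}^n$ be any great $(n-1)$-sphere, with $P$ an $n$-dimensional subspace. By the observation above, $S$ contains at most $n$ of the points of $M$, for otherwise $n+1$ linearly independent vectors would all lie in the $n$-dimensional space $P$, which is impossible. Hence $S$ meets at most $n$ of the singleton classes $\Gamma_1,\dots,\Gamma_{k-1}$, and together with color $k$ (which appears on $S$ since $S$ is infinite while $M$ is finite) we find that $S$ is colored with at most $n+1$ colors.

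The only real content is the general position step together with the linear-algebraic reformulation of ``lying on a common great sphere.'' It is worth emphasizing the single point where the argument diverges from the inversive case of Proposition~\ref{pr52}: because great $(n-1)$-spheres correspond to \emph{linear} hyperplanes rather than arbitrary affine ones, a great sphere can contain at most $n$ (rather than $n+1$) points in general position, and this drop by one is precisely what yields the improved bound of $n+1$ colors in place of $n+2$.
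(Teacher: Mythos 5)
Your proposal is correct and follows essentially the same approach as the paper: take $k-1$ points with every $n+1$ of them linearly independent, color them as singletons plus one color for the rest, and note that a great $(n-1)$-sphere lies in an $n$-dimensional linear subspace and so contains at most $n$ of the points. Your explicit moment-curve construction of the general-position set is a nice addition (the paper merely asserts existence), but the argument is the same.
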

\begin{proof}
Let $M = \{x_1,...,x_{k-1}\}\subset \mathbb{S}^n \subset \R^{n+1}$ consists of $k-1 \ge n+1$ points 
such that  every subset of $n+1$ points in $M$ are linearly independent. 
Thus there is no great $(n-1)$-sphere containing $(n+1)$ or more points in $M$.
Now if let $\Gamma_i = \{x_i\}$ and $\Gamma_k = \mathbb{S}^n - M$, then 
there is no great $(n-1)$-sphere containing $(n+2)$ or more different colors.
\end{proof}

\section{Application: Characterizing \m Transformations in the Plane}\label{sec4} 

We now prove the ``five-point theorem".

\begin{proof}[Proof of Theorem \ref{5point}.]

Let $T:\mathbb{S}^2\longrightarrow \mathbb{S}^n$ be a weakly circle preserving map.

If $T(\mathbb{S}^2)$ contains at least $6$-points, then by the six-point theorem above, the image 
$T(\mathbb{S}^2)$ is a $2$-sphere and $T$ must be a \m transformation.

It remains to deal with the case in which the image $T(\mathbb{S}^2)$ contains exactly $5$ points.
 We will show that no  such transformation exists. Suppose for contradiction that such a map $T$ existed and denote its image $T(\mathbb{S}^2) =\{m_1,m_2,m_3,m_4,m_5\}$. Define a coloring of $\mathbb{S}^2$ by the partition $\Gamma = \{\Gamma_i:i=1,2,...,5\}$ with $\Gamma_i = T^{-1}(m_i)$ for $1\leq m \leq 5$. 
 By Theorem \ref{th11b}, there will exist some circle in the domain $\mathbb{S}^2$ that requires $4$ or more colors. 
 Under $T$, the image of this circle contains $4$ points in $T(\mathbb{S}^2)$, 
 and since the map is weakly circle-preserving, these  $4$ points lie in on a circle
 in the range space. This  contradicts the hypothesis  that the five points comprising the image $T(\mathbb{S}^2)$ 
 are  in circular general position.
\end{proof}

\begin{rmk} 
It seems likely that the method of proof of the ``five-point theorem" given here can be  generalized to yield a local 
``five-point theorem" exactly  parallel
to the ``six point theorem" in \cite[Theorem 1]{GW79}. That is, the map can be taken to be $ T: U \to \mathbb{S}^n$,
with $U \subset \mathbb{S}^2$, where $U$ is a simply connected open set.
\end{rmk}

The following proposition shows  Theorem \ref{5point} is sharp in the sense that  if we change the hypothesis to 
$4$ points, then the weakly-circle preserving map need not be a \m transformation.

%
\begin{prop} \label{sharp-map}
There exists a weakly circle preserving map $T:\mathbb{S}^2 \longrightarrow \mathbb{S}^n$ such that
its range $T(\mathbb{S}^2)$ 
consists of $4$ points that do not all lie in a circle.
\end{prop}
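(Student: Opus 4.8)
The plan is to build $T$ as a pullback of a suitable four-point target configuration, thereby reducing the entire construction to the sharpness example already produced in Proposition \ref{pr12}. The guiding observation is that any set of three or fewer points in $\mathbb{S}^n$ lies on some circle (as noted just after the definition of weakly circle-preserving). Consequently, any map whose image meets every domain circle in at most three points is automatically weakly circle-preserving, and this is exactly the regime we will engineer.

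First I would fix four target points $m_1, m_2, m_3, m_4 \in \mathbb{S}^n$ that do not all lie on a common circle. Since $n \geq 2$, such a configuration exists: embed $\mathbb{S}^2$ in $\mathbb{S}^n$, choose three points determining a circle $C'$, and take $m_4$ to be any point off $C'$. Next I would invoke Proposition \ref{pr12} in the case $n = 2$ to obtain a full $4$-coloring $\Gamma = \{\Gamma_1, \Gamma_2, \Gamma_3, \Gamma_4\}$ of $\mathbb{S}^2$ with the property that no circle carries all four colors; equivalently, every circle in $\mathbb{S}^2$ meets at most three of the classes $\Gamma_i$.

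I would then define $T \colon \mathbb{S}^2 \to \mathbb{S}^n$ by setting $T(x) = m_i$ whenever $x \in \Gamma_i$. Because each $\Gamma_i$ is nonempty, the image $T(\mathbb{S}^2)$ is exactly $\{m_1, m_2, m_3, m_4\}$, four points not all on a circle, as required. To verify that $T$ is weakly circle-preserving, take any circle $C \subset \mathbb{S}^2$. The index set $\{i : \Gamma_i \cap C \neq \emptyset\}$ has at most three elements by the defining property of $\Gamma$, so $T(C)$ consists of at most three of the $m_i$; such a set lies on some circle $C'$ in $\mathbb{S}^n$, which confirms that $T(C)$ is contained in a circle.

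There is essentially no hard step here, which is the point: the entire difficulty has been offloaded onto Proposition \ref{pr12}, whose coloring is precisely the $k = n+2$ sharpness example for the main theorem in dimension two. The only remaining things to check are routine—the existence of four non-cocircular target points (immediate for $n \geq 2$) and the trivial fact that three points are always cocircular. Thus the only genuine ingredient is the recognition that the construction of a weakly circle-preserving map with a four-point image is logically equivalent to the existence of a $4$-coloring of $\mathbb{S}^2$ in which no circle sees four colors.
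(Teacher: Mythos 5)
Your proof is correct and essentially identical to the paper's: the paper likewise pulls back a non-cocircular four-point set $\{m_1,m_2,m_3,m_4\}\subset\mathbb{S}^n$ through a full $4$-coloring of $\mathbb{S}^2$ in which no circle sees all four colors, defining $T(x)=m_i$ on $\Gamma_i$. One small point in your favor: the paper cites Proposition \ref{pr52} for that coloring, but as stated that proposition only covers $k\ge n+3$ colors, so the needed $4$-coloring of $\mathbb{S}^2$ (the case $k=n+2$, $n=2$) is in fact supplied by Proposition \ref{pr12}, exactly as you cite it.
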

\begin{proof}
Let $M=\{m_1,m_2,m_3,m_4\}\subset \mathbb{S}^n$ be such that $M$ does not lie in a circle. By Proposition \ref{pr52}, there is a 4-coloring $\Gamma$ on $\mathbb{S}^2$ such that there is no circle containing all of the 4 colors. Denote $\Gamma_i$ as the set of points of color $i$ in $\mathbb{S}^2$, and define $T:\mathbb{S}^2 \longrightarrow \mathbb{S}^n$ by $T(x) = m_i$ if $x\in \Gamma_i$. This map is weakly circle preserving as every circle is mapped to at most three points in $\mathbb{S}^n$. The proposition follows.
\end{proof}

\section{Acknowledgments}
The first author thanks N. Nygaard (University of Chicago) for helpful discussions. 
The second author worked on this project in an 
REU Summer Program at the University of Michigan with supervisor  J.  C. Lagarias. 
Both authors thank J. C. Lagarias for helpful discussions and for editorial assistance.
We acknowledge use of  the computer package {\em Cinderella 2} \cite{RK12}  to draw the figures in this paper.

\end{document}